\newtheorem{theorem}{Theorem}[section]
\newtheorem{lemma}[theorem]{Lemma}
\newtheorem{remark}[theorem]{Remark}
\newtheorem{definition}[theorem]{Definition}
\newtheorem{proposition}[theorem]{Proposition}
\newtheorem{example}[theorem]{Example}
\begin{document}

\title
{Structure of the attractor for a non-local Chafee-Infante problem}



\author[E. M. Moreira]{Estefani M. Moreira}
\email{estefani@usp.br}

\author[J. Valero]{Jos\'e Valero}
\email{jvalero@umh.es}

\address[1]{Departamento de Matem\'{a}tica, Instituto de Ci\^{e}ncias Ma\-te\-m\'{a}\-ti\-cas e de Computa\c{c}\~{a}o, Universidade de S\~{a}o Paulo, Campus de S\~{a}o Carlos, Caixa Postal 668, S\~{a}o Carlos, 13560-970, Brazil}
\address[2]{Centro de Investigaci\'on Operativa, Universidad Miguel Hern\'andez de Elche, Avenida Universidad s/n, 03202 Elche, Spain}


	\keywords{
reaction-diffusion equations, nonlocal equations, global attractors, structure of the attractor, Chafee-Infante equation}

\subjclass[2020]{Primary: 35B40, 35B41, 35K55, 35K57, 35K59; Secondary: 35P05}

\begin{abstract}
In this article, we study the structure of the global attractor for a non-local one-\-di\-men\-sio\-nal quasilinear problem. The strong relation of our problem with a non-local version of the Chafee-Infante problem allows us to describe the structure of its attractor. For that, we made use of the Conley index and the connection matrix theories in order to find geometric information such as the existence of heteroclinic connections between the equilibria. In this way, the structure of the attractor is completely described.
\end{abstract}
\maketitle

\section{Introduction}
Reaction-diffusion equations with non-local term have become in recent years
a subject of great interest due to the fact that in some real applications
the diffusion term can be perturbed by non-local effects (see e.g. \cite%
{ChipotLovat}, \cite{ChipotMolinet}, \cite{Ch-Va-Verg}). The asymptotic behavior of solutions as times goes to infinity plays an important role in
the study of non-linear partial differential equations. In the last years,
several authors have studied the existence and properties of global
attractors for such kind of equations in both the autonomous and
nonautonomous frameworks (see \cite{Ahn}, \cite{CaHeMa15}, \cite{d0}, \cite%
{CaHeMa18}, \cite{CaHeMa18B}, \cite{CLLM}). Also, non-local
equations without uniquenes (see \cite{CAHeMa17}, \cite{CM-RVexist_charac}) and with
delay \cite{X1} have been considered. The existence and properties of
equilibria have been studied in \cite{CCM-RV21C-Inf} and \cite{Estefani2}.

In general, we consider an equation of the type%
\[
u_{t}-a(l(u\left( t\right) ))\Delta u=f(t,u\left( t\right) ),\quad t>\tau ,\
x\in \Omega ,
\]%
with Dirichlet boundary conditions on the domain $\Omega \subset \mathbb{R}%
^{n}$, where $l\left( u\right) $ is some functional (not necessarily linear)
and $a\left( \text{\textperiodcentered }\right) $ is a real function. The
most common cases in the literature consider that either $l(u\left( t\right)
)=\int_{\Omega }\Phi (x)u(t,x)dx$, for some $\Phi \in L^{2}\left( \Omega
\right) $, or $l(u\left( t\right) )=\left\Vert u\left( t\right) \right\Vert
_{H_{0}^{1}\left( \Omega \right) }^{2}.$ The last case is very interesting
because a Lyapunov function can be constructed, so the system is gradient
and it is possible to characterize the global attractor in terms of the
unstable sets of the stationary points \cite{CM-RVexist_charac}. This is a first step in order to obtain the structure of the global attractor.

In this article, we describe the inner structure of the global attractor of 
\begin{equation}\label{eq:nonlocal}
	\left\{\begin{aligned}
		&u_t = a(\|u_x\|^2) u_{xx}+\lambda f(u), \ x \in (0,\pi), \ t >0, \\
		& u(t,0)=u(t,\pi)=0, \ t \geq 0,\\
		& u(0,\cdot)=u_0 \in H^1_0(0,\pi),
	\end{aligned}\right.
\end{equation}
where $\lambda>0$, $a\in C^1( \mathbb{R}^+)$ is a non-decreasing and globally Lipschitz function with $0< m \leq a(\cdot)\leq M$, for constants $m,M>0$. The function $f\in C^2(\mathbb{R})$ is odd, with
\begin{equation}\label{eq:prop_f}
	f(0)=0, f'(0)=1, sf''(s)<0, s \in \mathbb{R} 
\end{equation}
and
\begin{equation}\label{eq:dissip_f}
\limsup_{|s|\rightarrow +\infty} \frac{f(s)}{s}\leq 0.
\end{equation}

The norm $\|\cdot\|$ denotes the usual norm in $L^2(0,\pi)$.

The inner structure of a global attractor is a very challenging subject. For instance, we need global well-posedness problems for which we may identify bounded invariants sets that attracts solutions. After that, we need to establish the connections between these invariant regions. 

For the case $a(\cdot)\equiv 1$, that is,
\begin{equation}\label{eq:C-I}
	\left\{
	\begin{aligned}
		&u_t = u_{xx}+ \lambda f(u), \ x \in (0,\pi), \ t>0,\\
		&u(t,0)=u(t,\pi)=0, \ t\geq 0,\\
		&u(0,\cdot) = u_0 \in H^1_0(0,\pi).
	\end{aligned}
	\right.
\end{equation}
the structure of the attractor is well-understood. This problem, we may call it the Chafee-Infante problem, appeared in \cite{C-I-n2} and \cite{C-I74} and plays an important role in the theory of nonlinear dynamical systems. In fact, \eqref{eq:C-I} inspired a lot of works after that, by a variety of authors that contributed to it or to its variations. For example, in \cite{Phillipo}, the author studies the structure of the attractor for a quasilinear Chafee-Infante equation. 

Before we continue, we will present basic definitions and results to make clear our goals. Consider a metric space $(X,d)$ and denote by $C(X)$ the set of all the continuous functions from $X$ to itself.

\begin{definition}
	A family $\{S(t):t\geq 0\}\subset C(X)$ is called a semigroup if it satisfies the following properties:
	\begin{itemize}
		\item[i)] $S(0)=I_X$, where $I_X$ denotes the identity map of $C(X)$;
		\item[ii)] $S(t+s)=S(t)S(s)$, for all $t,s\in \mathbb{R}^+$;
		\item[iii)] The map $\mathbb{R}^+\times X \ni (t,x)\mapsto S(t)x$ is continuous.
	\end{itemize}
\end{definition}
In the case where $S(t)$ is defined for all $t \in \mathbb{R}$ and the above properties are satisfied with $\mathbb{R}^+$ replaced by $\mathbb{R}$, $\{S(t):t \in \mathbb{R}\}$ is called a \emph{group}.

In the study of the asymptotic behavior of nonlinear dynamical systems, we sometimes can show the existence of a bounded subset of $X$ which somehow gives the information about all the dynamics, the so-called global attractor.
\begin{definition}
	A global attractor for a semigroup $\{S(t):t\geq 0\}$ is a set $\mathcal{A}\subset X$ satisfying the following:
	\begin{itemize}
		\item[i)] $\mathcal{A}$ is compact;
		\item[ii)] $\mathcal{A}$ is invariant, that is, $S(t)\mathcal{A}=\mathcal{A}$, for all $t \in \mathbb{R}^+$.
		\item[iii)] $\mathcal{A}$ attracts bounded sets: For any bounded set $B\subset X$ and $\varepsilon>0$, we can find $t_0 = t_0(B,\varepsilon)\in \mathbb{R}^+$ such that, for all $t\geq t_0$,
		$$
		\sup_{x \in B}\inf_{y \in \mathcal{A}}  d(S(t)x,y)<\varepsilon.
		$$
	\end{itemize}
\end{definition}

A function $\xi:\mathbb{R}\to X$ is a \emph{global solution} of $\{S(t):t\geq 0\}$ if $S(t)\xi(s)=\xi(t+s)$, for all $t \in \mathbb{R}^+$ and $s \in \mathbb{R}$. If for some $x^* \in X$, $\xi(t)=x^*$, for all $t \in \mathbb{R}$, is a global solution, we say that $x^*$ is an equilibrium of $\{S(t):t\geq 0\}$.

The global attractor can be characterized as 
$$
\mathcal{A}=\{\xi(0): \xi: \mathbb{R}\to X \mbox{ is a global solution of } \{S(t): t\geq 0\} \}.
$$

For more details, see \cite{CLRLibro}. 

The set of equilibria of \eqref{eq:C-I} satisfies the elliptic problem
\begin{equation}\label{eq:ellip_C-I}
	\left\{
	\begin{aligned}
		&u_{xx}+ \lambda f(u)=0, \ x \in (0,\pi),\\
		&u(0)=u(\pi)=0.\\
	\end{aligned}
	\right.
\end{equation}

In the mentioned works, the authors have shown that, for every $\lambda>0$, the set of equilibria of \eqref{eq:C-I}, $\mathcal{E}_\lambda$, is finite. Moreover, as long as the parameter $\lambda>0$ increases, a bifurcation from $0$ happens, which causes the appearance of a new pair of equilibria in every bifurcation.

A semilinear problem in a neighborhood of en equilibrium can be approximated by a linear problem, which is called its linearization. See \cite[Chapter 5]{H-Book-1980}, for more details. For instance, if $\lambda>0$ and $\phi \in \mathcal{E}_\lambda$,  \eqref{eq:C-I} is approximated in a neighborhood of $\phi$ by
\begin{equation}\label{eq:C-I_lineariz}
	\left\{
	\begin{aligned}
		&u_t = u_{xx}+ \lambda f'(\phi)u, \ x \in (0,\pi), \ t>0,\\
		&u(t,0)=u(t,\pi)=0, \ t\geq 0,\\
		&u(0,\cdot) = u_0 \in H^1_0(0,\pi).
	\end{aligned}
	\right.
\end{equation}

Now, since \eqref{eq:C-I_lineariz} is an autonomous linear problem, we may study the eigenvalues of $L: D(L)\subset L^2(0,\pi)\to L^2(0,\pi)$ with $Lu=u_{xx}+\lambda f'(\phi)u$, for $u \in D(L)=H^2(0,\pi)\cap H^1_0(0,\pi)$ in order to determine the stability or instability of $\phi$ for \eqref{eq:C-I}. 

In the case where $Lu=0$, for $u \in D(L)$, implies $u=0$, we say that $\phi$ is a hyperbolic equilibrium. For any $\lambda>0$, we can also show the existence of a manifold related to each equilibrium $\phi \in \mathcal{E}_\lambda$,
where 
$$\begin{aligned}
W^u(\phi)=\{x \in X: \mbox{ there is } \eta: \mathbb{R}\to X, \mbox{ a global solution of } \eqref{eq:C-I}\mbox{ with }
\eta(0)=x \\ \mbox{ and } \|\eta(t)-\phi\|_X\stackrel{t\rightarrow -\infty}{ \longrightarrow } 0\}.
\end{aligned}
$$

$W^u(\phi)$ is called the unstable manifold of $\phi$.
We may sometimes denote $\|\eta(t)-\phi\|_X\stackrel{t\rightarrow \pm\infty}{ \longrightarrow } 0$ by $\eta(t)\stackrel{t\rightarrow \pm\infty}{ \longrightarrow } \phi$.


Inside the global attractor, we can find special bounded invariant sets, such as equilibria. 
We have the first description of the attractor of \eqref{eq:C-I}, which is the following
\begin{equation}\label{eq:UnstMan_Attractor}
	\mathcal{A}_\lambda=\bigcup_{\phi \in \mathcal{E}_\lambda} W^u(\phi).
\end{equation}

Later \cite{BF88}, \cite{agenentMS} and \cite{art-henry-MS} give us a better description of the dynamics inside the global attractor: 
\begin{itemize}
	\item[a)] For any global bounded solution $\xi:\mathbb{R}\to H^1_0(0,\pi)$, we find $\phi_j,\phi_k\in \mathcal{E}_\lambda$ such that
	\begin{equation*}
	\phi_j \stackrel{t\rightarrow -\infty}{\longleftarrow} \xi(t)  \stackrel{t\rightarrow +\infty}{\longrightarrow} \phi_k.
	\end{equation*}
	
	Also, $\phi_j$ has at least one more zero in $[0,\pi]$ than $\phi_k$, if $\xi(\cdot)$ is not an equilibrium.
	
\item[b)] If $\phi_j,\phi_k\in \mathcal{E}_\lambda$ and $\phi_j$ has at least one more zero in $[0,\pi]$ than $\phi_k$, then we find a global bounded solution $\eta:\mathbb{R}\to H^1_0(0,\pi)$  satisfying
	\begin{equation*}
	\phi_j \stackrel{t\rightarrow -\infty}{\longleftarrow} \eta(t)  \stackrel{t\rightarrow +\infty}{\longrightarrow} \phi_k.
\end{equation*}
\end{itemize}

The global attractor of \eqref{eq:C-I} has a structure that is robust under perturbations, see \cite{CLRLibro} for more details. In \cite{art-henry-MS}, the author has shown that two pairs of equilibria for \eqref{eq:C-I} intersect transversely. This was the key step to conclude that \eqref{eq:C-I} generates a Morse-Smale semigroup (see \cite{BCLBook} for more information about the Morse-Smale theory).

Later in \cite{Mischaikow95}, it was proved that problems sharing some determined properties with the Chafee-Infante problem have the same structure inside the attractor. The idea is to construct a semi conjugation that relates the dynamics inside the attractor of those problems and the dynamics of an ODE.

Comparing all the knowledge we have so far for \eqref{eq:nonlocal} with the one for the Chafee-Infante equation, we see some similar results. For instance, \eqref{eq:nonlocal} is a globally well-posed problem and the related semigroup admits a global attractor that can be described as \eqref{eq:UnstMan_Attractor}. Although we do not have the usual concept of hyperbolicity, we can say that the equilibria of \eqref{eq:nonlocal} are hyperbolic in some sense. In the next section, we will be more precise about the mentioned results.

In order to obtain the structural stability, we need to prove that all equilibria intersect transversely. This is very challenging and cannot follow the same arguments used in \cite{art-henry-MS} for \eqref{eq:C-I}.

In that situation, the arguments in \cite{Mischaikow95} fit better our purposes, since it is based on identify topological properties that are sufficient to assure the description of the attractor. 

In this paper, we will calculate the Conley index of each equilibrium of \eqref{eq:nonlocal}. For that, we will use the continuation of the Conley index. We will also show that this argument cannot be always applied if we exclude one of the hypotheses.

In section 2, we will present results that are known for \eqref{eq:nonlocal} and new ones, such as the injectivity in the attractor. In section 3, we will present the basic Conley index theory and we will calculate the Conley index of each equilibrium. In section 4, we will show that the structure of the attractor of \ref{eq:nonlocal} is the same as for the usual Chafee-Infante problem by showing that our problem satisfies the conditions required in \cite{Mischaikow95}.

\section{Known results for problem \eqref{eq:nonlocal}}

In the papers \cite{CCM-RV21C-Inf} and \cite{CLLM}, the authors have studied problem \eqref{eq:nonlocal} and constructed a sequence of bifurcations similar to the one in the Chafee-Infante equation. To be more precise:
   \begin{theorem}\label{theo:exist.equilibria}	 If $a(0)N^2< \lambda\leqslant a(0)(N+1)^2,$ then \eqref{eq:nonlocal} has exactly $2N+1$ equilibria, $\{0\}\cup\{\phi_{j}^\pm: j=1,\dots, N\}$. For $j=1,\dots, N$, 
	   	\begin{itemize}
	    \item[i)]  $\phi^+_{j}$ has $j+1$ zeros in $[0,\pi]$ and $\phi^+_{j}(\pi-x)=(-1)^{j+1} \phi^+_{j}(x)$, for all $x \in [0,\pi]$.
		\item[ii)] $\phi_{j}^-(x)=-\phi_{j}^+(x)$, for all $x \in [0,\pi]$.
   		\end{itemize}
	\end{theorem}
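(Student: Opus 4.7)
The strategy is to reduce the equilibrium equation to a one-parameter family of classical Chafee--Infante problems and then enforce a scalar self-consistency constraint via the intermediate value theorem.

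An equilibrium $u\in H^1_0(0,\pi)$ of \eqref{eq:nonlocal} solves
\begin{equation*}
u_{xx}+\mu\,f(u)=0,\quad u(0)=u(\pi)=0,\qquad \mu:=\lambda/a(\|u_x\|^2).
\end{equation*}
Thus every nontrivial equilibrium is a stationary point of the local Chafee--Infante equation at \emph{some} parameter $\mu>0$, subject to the nonlocal compatibility condition $\mu\,a(\|u_x\|^2)=\lambda$. The classical theory for this local problem under \eqref{eq:prop_f} gives, for each integer $j\geq 1$, a nontrivial solution with exactly $j+1$ zeros in $[0,\pi]$ if and only if $\mu>j^2$; modulo sign this solution is unique, call it $\psi_\mu^j$, and the oddness of $f$ together with the Dirichlet boundary conditions force the symmetry $\psi_\mu^j(\pi-x)=(-1)^{j+1}\psi_\mu^j(x)$. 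Using the energy identity $\tfrac12(v_x)^2+\mu F(v)=\mu F(A_\mu)$ (with $F'=f$ and $A_\mu$ the amplitude), together with the time-map identity $\pi/j=\sqrt{2/\mu}\int_0^{A_\mu}dv/\sqrt{F(A_\mu)-F(v)}$, I would show that $g_j(\mu):=\|(\psi_\mu^j)_x\|^2$ is continuous and strictly increasing on $(j^2,\infty)$ with $g_j(\mu)\searrow 0$ as $\mu\searrow j^2$.

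With this in hand, define $G_j(\mu):=\mu\,a(g_j(\mu))$ on $(j^2,\infty)$. Since $a$ is non-decreasing and bounded below by $m>0$ and $g_j$ is strictly increasing, $G_j$ is continuous and strictly increasing, with $G_j(\mu)\to j^2 a(0)$ as $\mu\searrow j^2$ and $G_j(\mu)\geq m\mu\to\infty$. Under $a(0)N^2<\lambda\leq a(0)(N+1)^2$: for each $j\in\{1,\dots,N\}$ we have $\lambda>j^2 a(0)$, so the intermediate value theorem yields a unique $\mu_j^*>j^2$ with $G_j(\mu_j^*)=\lambda$; set $\phi_j^+:=\psi_{\mu_j^*}^j$ and $\phi_j^-:=-\phi_j^+$, which inherit properties (i)--(ii) from $\psi_{\mu_j^*}^j$. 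For $j\geq N+1$, any solution would require $\lambda=G_j(\mu)>j^2 a(0)\geq(N+1)^2 a(0)\geq\lambda$, impossible. Adding the trivial equilibrium $0$ exhausts the list and gives exactly $2N+1$ equilibria.

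The main obstacle is the strict monotonicity of $g_j$ on $(j^2,\infty)$: although this is a classical feature of the local Chafee--Infante problem, verifying it cleanly requires a careful analysis of how the amplitude $A_\mu$ and the $L^2$-norm of the derivative depend on $\mu$ through the time-map equation, and it crucially uses the sign conditions $f'(0)=1$ and $sf''(s)<0$ from \eqref{eq:prop_f} to guarantee that the map $\mu\mapsto A_\mu$ is itself monotone.
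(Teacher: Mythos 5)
Your reduction to the local Chafee--Infante problem via $\mu=\lambda/a(\|u_x\|^2)$, followed by the intermediate value theorem applied to $G_j(\mu)=\mu\,a(g_j(\mu))$ using the monotonicity of $\mu\mapsto\|(\psi_\mu^j)_x\|^2$ together with the monotonicity of $a$, is exactly the argument of \cite{CCM-RV21C-Inf}, to which the paper defers for this theorem rather than reproving it. The monotonicity fact you flag as the main obstacle is precisely the ingredient the paper itself quotes from that reference in the proof of Lemma \ref{lemma:cont.equilibria}, so your proposal is correct and follows essentially the same route.
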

 We observe that in the above theorem, the set of equilibria depends on $\lambda>0$, which will not be made explicit in the the notation, just in order to simplify it.

If we assume that $f$ is not odd, we can also construct the same sequence of bifurcation described in Theorem \ref{theo:exist.equilibria}, except for the symmetries and relations between the equilibria. This result is also true for $f$ and $a$ satisfying weaker conditions, see \cite{CCM-RV21C-Inf} for more details.

It can be shown that each solution of \eqref{eq:nonlocal} up to a time-reparameterization is a solution of
\begin{equation}\label{eq:nl_semilinear}
	\left\{\begin{aligned}
		&u_t =  u_{xx}+\lambda \frac{f(u)}{a(\|u_x\|^2)}, \ x \in (0,\pi), \ t >0, \\
		& u(t,0)=u(t,\pi)=0, \ t \geq 0,\\
		& u(0,\cdot)=u_0 \in H^1_0(0,\pi).
	\end{aligned}\right..
\end{equation}
For more details, see \cite{CM-RVexist_charac} and \cite{CLLM}. In particular, the problems share the same equilibria for each parameter $\lambda>0$. In \cite{Estefani2}, the authors have shown that the equilibria of \eqref{eq:nl_semilinear} are hyperbolic, with the exception of the $0$ equilibrium for $\lambda \in \{a(0)N^2: N \in \mathbb{N}\}$. The relation between \eqref{eq:nonlocal} and \eqref{eq:nl_semilinear} can be used to define a concept of hyperbolicity for the equilibria of \eqref{eq:nonlocal}, see \cite{Estefani2} for more details.

In the above reference, it is shown that the linearization for each equilibrium $\phi$ of \eqref{eq:nl_semilinear} is given by
$$
L v=v'' +\lambda\frac{f'(\phi)}{a(\|\phi'\|^2)}v-\tfrac{2\lambda^2 a'(\|\phi'\|^2)}{a(\|\phi'\|^2)^3}f(\phi)\int_{0}^{\pi} f(\phi(s))v(s)ds.
$$

This operator can be decomposed as
\begin{equation}
	L_\varepsilon = L_0 + \varepsilon B,
\end{equation}
where $L_0v=v'' +\lambda\frac{f'(\phi)}{a(\|\phi'\|^2)}v$ is a Sturm-Liouville operator, see \cite{Sagan}, and $Bv=f(\phi)\int_{0}^{\pi} f(\phi(s))v(s)ds$ is an integral operator of rank 1 and $\varepsilon= -\tfrac{2\lambda^2 a'(\|\phi'\|^2)}{a(\|\phi'\|^2)^3}\leq 0$.

Thus $L_\varepsilon$ can be seen as a bounded perturbation of $L_0$. Such perturbation may be enough to cause loss of simplicity of the eigenvalues and the knowledge about the number of zeros of the eigenfunctions. The precious and beautiful arguments to assure transversality for the Chafee-Infante equation in \cite{art-henry-MS} are strongly based on the mentioned properties. Hence they are not applicable in our situation, at least we do not know that so far.

As a consequence of the proof of hyperbolicity in \cite{Estefani2}, we have the following result:
\begin{lemma}
Let $a(0)N^2<\lambda< a(0)(N+1)^2$ Denote by $\mathcal{E}_\lambda$ the set of equilibria of \eqref{eq:nl_semilinear} and by $dim W^u(\phi)$, the dimension of $W^u(\phi)$, $\phi \in \mathcal{E}_\lambda$. Under the same notation of Theorem \ref{theo:exist.equilibria},  $dim W^u(0)=N$ and $dimW^u(\phi_{j,\lambda}^+)=dim W^u(\phi_{j,\lambda}^-)=j-1$,  for all $j=1,\dots, N$.
\end{lemma}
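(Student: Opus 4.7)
The plan is to identify $\dim W^u(\phi)$ with the Morse index of the linearization $L_\varepsilon$ at $\phi$ (the number of positive eigenvalues counted with multiplicity) and to extract this count from the decomposition $L_\varepsilon = L_0+\varepsilon B$ analyzed in the hyperbolicity argument of \cite{Estefani2}. The case $\phi=0$ is immediate: since $f(0)=0$, the rank-one operator $Bv=f(\phi)\int_0^\pi f(\phi(s))v(s)\,ds$ vanishes identically and, using $f'(0)=1$, $L_\varepsilon$ reduces to $L_0=\partial_x^2+\tfrac{\lambda}{a(0)}I$ on $H^2(0,\pi)\cap H^1_0(0,\pi)$, whose Dirichlet eigenvalues are $\tfrac{\lambda}{a(0)}-k^2$, $k\ge 1$; the hypothesis $a(0)N^2<\lambda<a(0)(N+1)^2$ selects exactly $N$ positive ones, giving $\dim W^u(0)=N$.

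For $\phi=\phi_j^{\pm}$ I would proceed in two steps. First, I count the positive eigenvalues of the Sturm--Liouville part $L_0 v=v''+\tfrac{\lambda f'(\phi_j^+)}{a(\|(\phi_j^+)'\|^2)}v$. Introducing the continuous potential $\tilde q(x)=\tfrac{\lambda}{a(\|(\phi_j^+)'\|^2)}\tfrac{f(\phi_j^+(x))}{\phi_j^+(x)}$, extended at the zeros of $\phi_j^+$ by $f'(0)=1$, the equilibrium $\phi_j^+$ itself is a zero-eigenfunction of $\tilde L=\partial_x^2+\tilde q$ under Dirichlet boundary conditions, and by Theorem \ref{theo:exist.equilibria}(i) it has exactly $j-1$ interior zeros in $(0,\pi)$. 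Sturm's oscillation theorem then places $0$ as the $j$-th eigenvalue of $\tilde L$, so $\tilde L$ has Morse index $j-1$. The condition $sf''(s)<0$, combined with $f(0)=0$ and the oddness of $f$, yields $f(s)/s>f'(s)$ for every $s\ne 0$; hence $\tilde q>q$ wherever $\phi_j^+\ne 0$, where $q(x)=\lambda f'(\phi_j^+(x))/a(\|(\phi_j^+)'\|^2)$. A careful min--max argument, exploiting the identity $L_0\phi_j^+=(q-\tilde q)\phi_j^+$ (which forces $\langle L_0\phi_j^+,\phi_j^+\rangle<0$), transfers the Morse index $j-1$ from $\tilde L$ to $L_0$.

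Second, I pass from $L_0$ to $L_\varepsilon=L_0+\varepsilon B$ via the affine homotopy $L_s=L_0+sB$ for $s\in[\varepsilon,0]$. Because $B$ is positive semi-definite of rank one, the eigenvalues of $L_s$ depend continuously and monotonically on $s$; the hyperbolicity established in \cite{Estefani2} guarantees $0\notin\sigma(L_s)$ along the whole interval, so no eigenvalue can cross zero and the Morse index is preserved, yielding $\dim W^u(\phi_j^+)=j-1$. The equality $\dim W^u(\phi_j^-)=\dim W^u(\phi_j^+)$ then follows from $\phi_j^-=-\phi_j^+$ together with the oddness of $f$, which make the two linearizations unitarily equivalent.

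The main obstacle is this second step: a rank-one perturbation can in principle push a single eigenvalue of $L_0$ through zero, and ruling this out demands the hyperbolicity statement along the whole homotopy rather than merely at the target value $s=\varepsilon$. This is precisely what the detailed spectral analysis of \cite{Estefani2} supplies, which is why the lemma is stated as a direct consequence of the hyperbolicity result there.
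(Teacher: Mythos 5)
The paper does not prove this lemma at all: it is imported wholesale from the hyperbolicity analysis in \cite{Estefani2}, and the only thing the surrounding text supplies is the decomposition $L_\varepsilon=L_0+\varepsilon B$ that your argument is built on. So your reconstruction is at least aimed at the right machinery, and your treatment of $\phi=0$ (where $B$ vanishes because $f(0)=0$ and the spectrum of $\partial_x^2+\tfrac{\lambda}{a(0)}I$ is explicit) is complete and correct. But two of your steps have genuine gaps.

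First, the passage from $\tilde L=\partial_x^2+\tilde q$ to $L_0=\partial_x^2+q$ does not work as described. The inequality $\tilde q>q$ (valid off the zero set of $\phi_j^+$, by $sf''(s)<0$) gives $L_0\leq\tilde L$ in the form sense, hence $\mu_k(L_0)\leq\mu_k(\tilde L)$ for every $k$; since $\mu_j(\tilde L)=0$ this yields only the \emph{upper} bound: $L_0$ has at most $j-1$ positive eigenvalues (indeed $\mu_j(L_0)<0$). The lower bound $\mu_{j-1}(L_0)>0$ cannot come from this comparison, and the identity $\langle L_0\phi_j^+,\phi_j^+\rangle<0$ you invoke produces yet another direction of negativity, which pushes in the wrong direction. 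The classical argument for Chafee--Infante-type equilibria obtains $\mu_{j-1}(L_0)>0$ from a separate oscillation argument, e.g.\ using that $(\phi_j^+)_x$ solves $v''+qv=0$ with exactly $j$ interior zeros but does not satisfy the Dirichlet conditions. You need to supply that step; as written, ``a careful min--max argument transfers the Morse index'' asserts the conclusion rather than proving it.

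Second, your homotopy step needs $0\notin\sigma(L_0+sB)$ for \emph{all} $s\in[\varepsilon,0]$, whereas the hyperbolicity result quoted in the paper concerns only the endpoint $s=\varepsilon$. Since $B$ is positive semidefinite of rank one and $\varepsilon\leq 0$, eigenvalue interlacing gives $\mu_{k}(L_0)\geq\mu_k(L_\varepsilon)\geq\mu_{k+1}(L_0)$, so the Morse index can drop by exactly one, and hyperbolicity of $L_\varepsilon$ alone does not decide whether $\mu_{j-1}(L_\varepsilon)$ is positive or negative. You flag this yourself and then discharge it by asserting that \cite{Estefani2} proves nondegeneracy along the whole segment; that may well be what that reference does, but it is an unverified hypothesis in your write-up, not a consequence of anything stated in this paper. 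Either quote the precise intermediate statement from \cite{Estefani2} that controls $\sigma(L_s)$ for all $s$, or replace the homotopy by a direct computation of the sign of $\mu_{j-1}(L_\varepsilon)$.
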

 
It is important to mention that we do not have an answer to whether the equilibria are hyperbolic if $f$ is not odd. However, we still can analyze a neighborhood of each equilibrium in order to retrieve information using the Conley index theory. 

In \cite{CLLM}, the authors show the existence of a semigroup related to \eqref{eq:nonlocal} that admits a global attractor $\mathcal{A}$ in the phase space $H^1_0(0,\pi)$ (see also \cite{CM-RVexist_charac}, \cite{CCM-RV21C-Inf} for other conditions on the nonlinear function $f$, which include the possibility of non-uniqueness of solutions).

It is well known (see \cite{CM-RVexist_charac}, \cite{CCM-RV21C-Inf}, \cite{Estefani2} for more details) that the functional $%
E:H_{0}^{1}\left( 0,\pi \right) \rightarrow \mathbb{R}$ given by 
\begin{equation}
E\left( u\right) =\frac{1}{2}\int_{0}^{\left\Vert u_{x}\right\Vert
^{2}}a\left( s\right) ds-\lambda \int_{0}^{\pi }\int_{0}^{u\left( x\right)
}f\left( s\right) dsdx  \label{energy}
\end{equation}%
is a Lyapunov function.  This, together with the fact that the number of
equilibria is finite, implies that the global attractor consists of the set
of equilibria $\mathcal{E}_{\lambda }$ and the heteoclinic connections
between them, that is, if $x\in \mathcal{A}_{\lambda }$ but $x\not\in 
\mathcal{E}_{\lambda }$, then there are $z_{1},z_{2}\in \mathcal{E}_{\lambda
}$ and a global bounded solution $\xi :\mathbb{R}\rightarrow H_{0}^{1}\left(
0,\pi \right) $ such that $\xi \left( 0\right) =x$ and%
\begin{eqnarray*}
\xi \left( t\right)  &\rightarrow &z_{1}\text{ as }t\rightarrow +\infty , \\
\xi \left( t\right)  &\rightarrow &z_{2}\text{ as }t\rightarrow -\infty .
\end{eqnarray*}

The following proposition will give us the result of backward uniqueness of solutions in the attractor of \eqref{eq:nonlocal}. Although we do not have a semilinear equation, we can adapt the proof presented in \cite{Temam88}, in Chapter 3, Section 6.

\begin{proposition}\label{Prop:inject.A}
The semigroup restricted to the global attractor $\mathcal{A}$ of \eqref{eq:nonlocal} is injective. In other words, if $u:\mathbb{R}\to H^1_0(0,\pi)$ and $v:\mathbb{R}\to H^1_0(0,\pi)$ are global solutions of \eqref{eq:nonlocal} with $u(\mathbb{R})\cap v(\mathbb{R})\neq \emptyset$, then $u(\mathbb{R})=v(\mathbb{R})$.
\end{proposition}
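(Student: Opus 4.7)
The plan is to reduce the stated injectivity to a backward uniqueness statement along the attractor, and then to prove that statement by adapting the log-convexity (Dirichlet-quotient) method of Temam, Chapter 3, Section 6, to our nonlocal setting. If $u(t_0) = v(s_0)$ for some $t_0, s_0 \in \mathbb R$, then standard forward uniqueness for \eqref{eq:nonlocal} gives $u(t_0+t) = v(s_0+t)$ for every $t \geq 0$. The missing ingredient is backward uniqueness on $\mathcal A$: any two global solutions lying in the attractor that coincide at one time coincide on the whole preceding half-line. Once this is established, the equality propagates to every $t \in \mathbb R$, so $u$ and $v$ are time-shifts of one another and $u(\mathbb R) = v(\mathbb R)$ follows at once.

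For the backward uniqueness I would proceed as follows. Let $u, v$ be two global solutions lying in $\mathcal A$ with $u(0) = v(0)$ and set $w := u - v$. A direct computation yields
\begin{equation*}
w_t = a(\|u_x\|^2)\, w_{xx} + \bigl[a(\|u_x\|^2) - a(\|v_x\|^2)\bigr] v_{xx} + \lambda\,\bigl[f(u) - f(v)\bigr] \;=:\; a(\|u_x\|^2)\, w_{xx} + g(t).
\end{equation*}
Parabolic smoothing, together with the invariance of $\mathcal A$, provides a uniform $H^2$-bound for $u$ and $v$ along their trajectories. Combining this with the Lipschitz character of $a$ and of $f$, and with the identity $\|u_x\|^2 - \|v_x\|^2 = (u_x + v_x,\, w_x)$, one obtains
\begin{equation*}
\|g(t)\|^2 \;\leq\; C\bigl(\|w(t)\|^2 + \|w_x(t)\|^2\bigr),
\end{equation*}
for a constant $C$ depending only on the attractor.

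With this estimate in hand, the Dirichlet-quotient argument proceeds as in Temam's scheme. While $w(t) \neq 0$, set
\[
\Lambda(t) := \frac{\|w_x(t)\|^2}{\|w(t)\|^2}.
\]
Differentiation along the equation for $w$, use of the Cauchy--Schwarz bound $\|w_x\|^2 \leq \|w_{xx}\|\,\|w\|$, the uniform ellipticity $m \leq a(\|u_x\|^2) \leq M$, and the estimate on $g$ yield a differential inequality of the form $\Lambda'(t) \leq C_1 + C_2\,\Lambda(t)$, which keeps $\Lambda$ bounded on every compact subinterval. Substituting this bound into the identity
\begin{equation*}
\frac{d}{dt}\log \|w(t)\|^2 \;=\; -2\,a(\|u_x\|^2)\,\Lambda(t) + \frac{2\,(g(t),\,w(t))}{\|w(t)\|^2}
\end{equation*}
produces a lower bound on $\log\|w(t)\|^2$ that prevents $\|w(t)\|$ from reaching zero in finite time unless $w\equiv 0$. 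Since $w(0) = 0$, this forces $w \equiv 0$ on $(-\infty, 0]$, completing the backward uniqueness.

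The main obstacle, and the reason the classical semilinear proof does not apply verbatim, is the quasilinear nonlocal character of \eqref{eq:nonlocal}. The principal part $a(\|u_x(t)\|^2)\,\partial_x^2$ is time-dependent through the solution itself, and the forcing $g(t)$ is controlled by $\|w\| + \|w_x\|$ rather than by $\|w\|$ alone, because of the $(u_x + v_x,\, w_x)$ factor. The $\|w_x\|$-contributions produce cross terms in $d\Lambda/dt$ that must be absorbed using both the coercivity $a \geq m > 0$ and the uniform $H^2$-bound on the attractor; checking that every such term fits into the affine inequality $\Lambda'(t) \leq C_1 + C_2\,\Lambda(t)$ is the delicate technical point of the adaptation.
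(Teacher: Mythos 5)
Your argument is correct and rests on the same core mechanism as the paper's proof: reduce to backward uniqueness on the attractor and establish it by the Dirichlet--quotient (log-convexity) method of Temam, then propagate equality by forward uniqueness and a time shift. The genuine difference is in how the equation for $w=u-v$ is set up. The paper first passes from \eqref{eq:nonlocal} to the time-reparameterized semilinear problem \eqref{eq:nl_semilinear}, so that $w$ solves $w_t=w_{xx}+h$ with a \emph{constant} principal part and $\|h\|\leq C\|w_x\|$; the only a priori information needed there is the $H^1_0$-boundedness of the attractor. You keep the quasilinear form $w_t=a(\|u_x\|^2)w_{xx}+g$, which (i) leaves a time-dependent scalar coefficient in the principal part --- harmless, since the cancellation $\langle w, w_{xx}+\Lambda w\rangle=0$ is unaffected and $m\leq a\leq M$ controls the signs, exactly as you indicate --- and (ii) produces the extra term $[a(\|u_x\|^2)-a(\|v_x\|^2)]v_{xx}$, whose control requires a uniform $H^2$-bound for trajectories in $\mathcal{A}$. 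That bound does hold (by invariance and parabolic smoothing applied to \eqref{eq:nl_semilinear}, using $f(0)=0$ so that $f(u(t))\in H^1_0$ and a short bootstrap), so there is no gap, but it is an additional ingredient that the paper's reparameterization trick avoids entirely; conversely, your route avoids having to track how the coincidence time transforms under the change of time variable. Both versions give $\Lambda$ bounded on compact subintervals and hence the same contradiction with $w$ vanishing at the endpoint, so either write-up is acceptable once the $H^2$-bound is justified explicitly.
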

\begin{proof}
	Suppose that we find global bounded solutions $\bar{u}$ and $\bar{v} $ with $\bar{u}(\mathbb{R})\cap \bar{v}(\mathbb{R})\neq \emptyset$.
	
	By \cite{CM-RVexist_charac} and \cite{Estefani2}, applying a change of variable, we find $u:\mathbb{R}\to H^1_0(0,\pi)$ and $v:\mathbb{R}\to H^1_0(0,\pi)$ which are solutions of \eqref{eq:nl_semilinear} related, respectively, to $\bar{u}$  and $\bar{v}$. 
	
	By construction, $u(\mathbb{R})\cap v(\mathbb{R})\neq \emptyset$. Without loss of generality, we may assume that $u(T)=v(T)$, for some $T \in \mathbb{R}$. 
	
	Define $w: \mathbb{R}\to H^1_0(0,\pi)$ as $w(t)=u(t)-v(t)$, for $t \in \mathbb{R}$. Our goal is to prove that $w(t)=0$, for all $t\in \mathbb{R}$. Suppose, by contradiction, that we can find $t_0 \in \mathbb{R}$ for which $w(t_0)\neq 0$.
	
Now, $w$ satisfies
	\begin{equation}\label{eq:difference}
		w_t =  w_{xx}+h(t),
	\end{equation}
	where $h(t)=\lambda\left[\tfrac{f(u)}{a(\|u_x\|^2)}- \tfrac{f(v)}{a(\|v_x\|^2)}\right] $. 
Observe that 
\begin{equation}\label{eq:h(t)}
	\begin{aligned}
	\|h(t)\|&\!\leq \frac{\lambda}{m^2}\!\left\| \left[a(\|v_x\|^2)\!-\!a(\|u_x\|^2)\right]\!f(u) + a(\|u_x\|^2)\!\left[f(u)-f(v)\right]\right\|\! \leq C\|w_x\|,
	\end{aligned}
\end{equation}	
for some constant $C>0$. Hence, $h(t) \in L^2(0,\pi)$, for all $t\in \mathbb{R}$. By the variation of constants formula, we can see that the problem is locally well-posed. In particular, we find 
$$
t_1 =\sup\{t \in [t_0, T]: w(s)\neq 0 \mbox{ for } s \in [t_0,t] \},
$$ 
with $t_1>t_0$ and $w(t_1)=0$.
	
	For  $t \in [t_0,t_1)$, define the functions $\Gamma(t)=\frac{\|w_x(t)\|^2}{\|w(t)\|^2}$ and	$g(t)=\log \|w(t)\|^{-1}$. Then
	$$\begin{aligned}
		\frac{1}{2}\frac{d}{dt}\Gamma(t)
		&=\frac{\left<(w_t)_x,w_x\right>}{\|w\|^2} - \frac{\|w_x\|^2}{\|w\|^4}\left<w_t,w\right>=\frac{\left<w_t,-w_{xx}-\Gamma(t)w\right>}{\|w\|^2} \\
		&=\frac{\left<w_{xx}+\Gamma(t)w,-w_{xx}-\Gamma(t)w\right>}{\|w\|^2}+\frac{\left<h-\Gamma(t)w,-w_{xx}-\Gamma(t)w\right>}{\|w\|^2}\\
		&=-\frac{\|w_{xx}+\Gamma(t)w\|^2}{\|w\|^2}+\frac{\left<h,-w_{xx}-\Gamma(t)w\right>}{\|w\|^2} \\
		&\leq -\frac{1}{2}\frac{\|w_{xx}+\Gamma(t)w\|^2}{\|w\|^2}+\frac{1}{2}\frac{\|h\|^2}{\|w\|^2}\leq C\Gamma(t).
	\end{aligned}
	$$
The last line is obtained by using the Young's inequality. Hence, for all $t \in [t_0,t_1)$, $$\Gamma(t)\leq \Gamma(t_0)+C(t-t_0).$$
	
Now, by \eqref{eq:h(t)},
	$$\begin{aligned}
		\frac{d}{dt}g(t) =-\frac{1}{2}\frac{d}{dt}\log \|w\|^{2} = -\frac{\left<w_t,w\right>}{\|w\|^{2}}=&-\frac{\left<w_{xx},w\right>}{\|w\|^{2}}-\frac{\left<h,w\right>}{\|w\|^{2}}\\
		&\leq \Gamma(t)+C\Gamma^\frac{1}{2}(t) \leq 2\Gamma(t)+ C^2.
	\end{aligned}
	$$
	
Thus, for all $t \in [t_0,t_1)$,
	$$
	\log\|w(t)\|^{-1}\leq \log \|w(t_0)\|^{-1} +[2\Gamma(t_0)+C^2](t-t_0) +C(t-t_0)^2<\infty.
	$$
	
	We have shown that $g$ is uniformly bounded in $[t_0,t_1)$, which is a contradiction with $w(t_1)=0$. The contradiction comes from assuming that we find $t_0\in \mathbb{R}$ such that $w(t_0)\neq 0$.
	
	 Therefore, $u(t)=v(t)$, for all $t \in \mathbb{R}$, so $\bar{u}(\mathbb{R})=u(\mathbb{R})=v(\mathbb{R})=\bar{v}(\mathbb{R})$, as desired.
\end{proof}

The above proposition will be considered again in the last section. 


\section{The Conley index of the equilibria}

The Conley index was introduced in \cite{Conley} in the context of semiflows on locally compact metric spaces. Later, the concept was extended to metric spaces for semiflows on not necessarily locally compact spaces by \cite{Rybakowski}. The Conley index is a concept that  gives a topological description for a neighborhood of an isolated invariant set (in our case, simply an isolated equilibrium). 

In what follows, we will calculate the Conley index for each equilibrium of \eqref{eq:nonlocal}. Before that, we present definitions and theorems that we will use here. The exposition of the subject will be brief and  focused on our applications. For more details, such as the proof of theoretical results, see \cite{Rybakowski} and \cite{Phillipo_IndCon}.

Consider a semigroup $\{S(t):t\geq 0\}\in C(X)$ and subsets $Y\subset N$ of $X$. We say that $Y$ is \emph{$N$-positively invariant} under the action of $\{S(t):t\geq 0\}$ if, for any $\tau \in \mathbb{R}^+$ and $y \in Y$, $S(t)y \in N$, for all $t \in [0,\tau]$, implies $S(t)y \in Y$, for all $t \in [0,\tau]$. Just for simplicity, denote $\cup\{S(t)y: t \in [0,\tau]\}$ by $S([0,\tau])y$.

A closed set $N\subset X$ is an \emph{isolating neighborhood} of $K$ if $K \in int(N)$ (the interior of $N$) and $K$ is closed and the largest invariant set in $N$. In that case, $K$ is called an isolated invariant set.

\begin{definition}
We say that a pair of sets $\left<N_1,N_2\right>$ is an \emph{index pair} in $N$ if satisfies:
\begin{itemize}
	\item[i)] $N_1, N_2$ are closed subsets of $N$ which are $N$-positively invariant;
	\item[ii)] $K \in int(N_1\setminus N_2);$
	\item[iii)] If for some $y \in N_1$ we find $t_0 \in \mathbb{R}^+$ such that $S(t_0)y \notin N$, then there is $\tau \in [0,t_0]$ for which $S([0,\tau])y\subset N$ and $S(\tau)y \in N_2$.
\end{itemize}
\end{definition}

Given $Y\subset X$ and $s\in \mathbb{R}^+$, define the set
$$
Y^s=\{x \in X: \mbox{ there is } y \in Y \mbox{ with } S([0,s])y \subset Y \mbox{ and } S(s)y=x\}.
$$

\begin{definition}
We say that a pair of sets $\left<N_1,N_2\right>$ is a \emph{quasi-index pair} in $N$ if it satisfies:
	\begin{itemize}
		\item[i)] There are $\tilde{N}_1 \subset X$ and $t \in \mathbb{R}^+$ such that $N_1\setminus N_2\subset \tilde{N}_1$, $\tilde{N}_1^{t}\subset N_1$ and $\left<\tilde{N}_1, N_2\right>$ is an index pair in $N$.
		\item[ii)] Either $N_1$ is $N$-positively invariant or else there is $M_1\subset X$ which is a $N\setminus N_2$-positively invariant closed subset, with $M_1\setminus N_2\subset \tilde{N}_1$ and $M_1^{s}=N_1$, for some $s\in \mathbb{R}^+$.
	\end{itemize}
\end{definition}

The existence of a quasi-index pair is assured in the case where $N$ is an \emph{admissible} set (see \cite{Rybakowski}): for all sequences $\{x_n\}_{n\in \mathbb{N}}\subset N$, $\{t_n\}_{n\in \mathbb{N}}\subset \mathbb{R}^+$, $t_n \rightarrow +\infty$, satisfying $S([0,t_n])x_n \in N$, for all $n \in \mathbb{N}$, we find a convergent subsequence of $\{S(t_n)x_n\}_{n\in \mathbb{N}}$.

 Given two closed subsets $Y,A \subset X$, we define the relation 
$$
\begin{aligned}
x \in A\cap Y &\implies x\sim y, \mbox{ for all }  y \in A\cap Y,\\
x \in Y\setminus A \mbox{ and } x\sim y &\iff y=x
\end{aligned}.
$$

So, the pointed space
$$
\left[\frac{Y}{A}, [A]\right]=\{[y]: \ y \in Y \mbox{ and } [y]=\{x\in Y: x\sim y\}\}
$$
is in fact a topological pointed space, with a topology induced by $Y$.

For an isolated invariant set $K$ that admits an admissible neighborhood $N$, we define the Conley index $I(K, S(\cdot))$ (or just $I(K)$) as the topological space given by $\left[\frac{N_1}{N_2}, [N_2]\right]$, for a quasi-index pair $\left<N_1,N_2\right>$ in $N$. The concept is well-defined, see Theorem I.9.4 in \cite{Rybakowski}. The Conley index can be calculated in situations in which the Morse index cannot. But, when both are defined, they are related, see for instance Corollary II.11.2 \cite{Rybakowski}. Another important characteristic of the Conley index is its continuation property. We will be more precise below.

Consider the family of semigroups $\{S_\tau(t):t \geq 0\}$, for $\tau \in [0,1]$. Define the set 
$$ \mathcal{S}(X) =\bigcup_{\tau \in [0,1]}\{ (K_\tau, S_\tau(\cdot)): K_\tau \mbox{ is an isolated invariant set for }S_\tau(\cdot)  \}.
$$

\begin{definition}\label{def:alpha_cont}
A function $\alpha:[0,1]\to \mathcal{S}(X)$ is called $\mathcal{S}$-continuous if, for any $\tau_0 \in [0,1]$, we find an open neighborhood $W\subset [0,1]$ of $\tau_0$ and a closed set $N\subset X$ such that: 
\begin{itemize}
\item[i)] For any $\tau\in W$, $N$ is an isolating neighborhood for $K_\tau$, where $K_\tau$ represents the largest invariant set in $N$ under the action of $\{S_\tau(t):t \geq 0\}$.
\item[ii)]
For all sequences $\{\tau_n\}_{n\in \mathbb{N}}\subset W$ with $\tau_n\rightarrow \tau_0$, $\{t_n\}_{n\in \mathbb{N}}\subset \mathbb{R}^+$, $t_n \rightarrow +\infty$, $\{x_n\}_{n\in \mathbb{N}}\subset N$ with $S_{\tau_n}([0,t_n])x_n\subset N$, $n \in \mathbb{N}$, the sequence $\{S_{\tau_n}(t_n)x_n\}_{n\in \mathbb{N}}$ has a convergent subsequence.
\item[iii)] Consider $\{\tau_n\}_{n \in \mathbb{N}},\tau_0 \in [0,1]$ with $\tau_n\rightarrow \tau$. Then, the family of semigroups $\{S_{\tau_n}(t):t\geq 0\}_{n \in \mathbb{N}}$ is continuous, that is, given  $\{t_n\}_{n \in \mathbb{N}},t_0\subset \mathbb{R}^+$ and $\{x_n\}_{n \in \mathbb{N}}\in \mathbb{N},x_0$ with 
$$
t_n\rightarrow t_0 \mbox{ and } x_n \rightarrow x_0, \mbox{ as } n\rightarrow +\infty,
$$
we have $S_{\tau_n}(t_n)x_n\rightarrow S_{\tau_0}(t_0)x_0$ as $n\rightarrow +\infty$.
\end{itemize}
\end{definition}

\begin{remark}
 In our context, the items \emph{ii)} and \emph{iii)} are satisfied if we assume for instance that $\{S_\tau(t):t \geq 0\}$, for $\tau \in [0,1]$, is collectively asymptotically compact.
\end{remark}

One important characteristic of the Conley index is its continuation property.

\begin{theorem}[Theorem I.12.2, \cite{Rybakowski}] \label{theo:ContIndex}
	Suppose that $\alpha: [0,1]\to \mathcal{S}(X)$ is $\mathcal{S}$-continuous. Then $I(K_\tau, S_\tau(\cdot))$ is constant, for all $\tau \in [0,1]$.
\end{theorem}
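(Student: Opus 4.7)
The plan is to establish local constancy of $I(K_\tau, S_\tau(\cdot))$ in $\tau$ and then invoke connectedness. Since the set of pointed homotopy types is a ``discrete'' target for the Conley index, it suffices to prove that every $\tau_0 \in [0,1]$ has an open neighborhood on which $\tau \mapsto I(K_\tau, S_\tau(\cdot))$ is constant; then the subset of $[0,1]$ on which $I(K_\tau) = I(K_0)$ is both open and closed, hence equals $[0,1]$.

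For the local step, I would first use the $\mathcal{S}$-continuity of $\alpha$ to fix, around an arbitrary $\tau_0$, an open neighborhood $W \subset [0,1]$ and a closed set $N \subset X$ such that $N$ isolates $K_\tau$ for every $\tau \in W$ and the uniform admissibility condition \emph{ii)} of Definition \ref{def:alpha_cont} holds. Next I would build a single quasi-index pair $\left<N_1, N_2\right>$ in $N$ that is simultaneously valid for all $\tau$ in a possibly smaller neighborhood $W' \subset W$. The existence of such a pair for the fixed parameter $\tau_0$ is guaranteed by admissibility; the task is to verify that shrinking $W'$ preserves the three defining properties of a quasi-index pair when $S_{\tau_0}$ is replaced by $S_\tau$.

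Once $\left<N_1, N_2\right>$ is a quasi-index pair for every $\tau \in W'$, the pointed space $\bigl[\frac{N_1}{N_2}, [N_2]\bigr]$ is literally \emph{the same} topological object for every such $\tau$: only the underlying dynamics change, not the sets used to compute the index. Hence $I(K_\tau, S_\tau(\cdot)) = I(K_{\tau_0}, S_{\tau_0}(\cdot))$ on $W'$, and by the well-definedness of the Conley index (Theorem I.9.4 of \cite{Rybakowski}) this equality does not depend on the particular quasi-index pair chosen. This yields local constancy and hence the result.

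The main obstacle is the verification that $\left<N_1, N_2\right>$ remains a quasi-index pair under small perturbations of the semigroup, especially the exit property \emph{iii)} of index pairs and the $N$-positive invariance of $N_1$ and $N_2$. This cannot be deduced from pointwise continuity alone: a trajectory of $S_{\tau_0}$ that exits $N$ through $N_2$ might, for nearby $\tau$, drift and leave through another part of $\partial N$ unless one has uniform control in $\tau$. The key ingredients are precisely the joint continuity $S_{\tau_n}(t_n)x_n \to S_{\tau_0}(t_0)x_0$ and the uniform admissibility from items \emph{ii)}--\emph{iii)} of Definition \ref{def:alpha_cont}: the former ensures pointwise tracking of orbits as $\tau$ varies, and the latter prevents sequences of near-escape orbits from accumulating in pathological ways. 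Combining these with a standard compactness argument on the complement $N \setminus \mathrm{int}(N_1 \setminus N_2)$ forces a uniform exit time, after which the exit and positive invariance properties for $S_\tau$ follow by a contradiction argument for $\tau$ sufficiently close to $\tau_0$.
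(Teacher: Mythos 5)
You should first be aware that the paper does not prove this statement at all: it is Theorem I.12.2 of Rybakowski's book, imported verbatim as a citation, so there is no in-paper argument to compare against. Judged on its own, your outer skeleton (local constancy of the index plus connectedness of $[0,1]$) is the standard strategy, but the local step contains a genuine gap. You reduce everything to producing a \emph{single} quasi-index pair $\left<N_1,N_2\right>$ that is simultaneously valid for all $\tau$ in a neighborhood $W'$ of $\tau_0$, and then observe that the pointed space $\bigl[\tfrac{N_1}{N_2},[N_2]\bigr]$ is literally the same object for each such $\tau$. The problem is that such a common pair does not exist in general: $N$-positive invariance of $N_1$ and $N_2$ and the exit property iii) are dynamical conditions on individual trajectories, and they are not open in the parameter. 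A trajectory of $S_{\tau_0}$ that meets $\partial N_1$ tangentially, or that exits $N$ exactly through $N_2$, can for arbitrarily close $\tau$ cross into $N_1\setminus N_2$ or leave $N$ elsewhere; your appeal to a ``uniform exit time plus a contradiction argument'' does not control this, because the failure is not a matter of escape times but of which sets the perturbed orbits visit. This is precisely why the known proofs (Conley, Salamon, Rybakowski) do not attempt to freeze one index pair across parameters.

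The standard repair, and essentially what Rybakowski does, is to pass to the parametrized semiflow $\tilde S(t)(x,\tau)=(S_\tau(t)x,\tau)$ on $X\times[0,1]$, use items i)--iii) of Definition \ref{def:alpha_cont} to show that $N\times \overline{W}$ is an admissible isolating neighborhood of $\bigcup_{\tau\in W}K_\tau\times\{\tau\}$, build a (quasi-)index pair there, and then prove that the slices over different values of $\tau$ have \emph{homotopy equivalent} quotient spaces, the equivalences being induced by inclusions and flow-defined deformations. The content of the continuation theorem lives entirely in constructing those homotopy equivalences; your argument replaces them with an identity of sets that cannot be arranged. So the proposal as written is not a proof, and the missing ingredient is the product-flow construction (or an equivalent mechanism producing $\tau$-dependent index pairs together with maps between their quotients).
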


\begin{example}\label{exemp:ChaInfConleyIndex}
For $N^2<\lambda\leq (N+1)^2$, problem \eqref{eq:C-I} has the $2N+1$ equilibria $\{\phi_{j,\lambda}^\pm : j=1,\dots, N\}\cup\{0\}$, with $\phi_{j,\lambda}^\pm$ having $j-1$ zeros in $(0,\pi)$, $j=1,\dots, N$. See \cite{C-I-n2} and \cite{C-I74} for more details.

For each $j=1,\dots, N$, $\phi_{j,\lambda}^+$ and $\phi_{j,\lambda}^-$ are hyperbolic and 
$$dimW^u(\phi_{j,\lambda}^+) = dimW^u(\phi_{j,\lambda}^-)=j-1.$$

 By Corollary II.11.2, \cite{Rybakowski}, the indexes $I(\{\phi_{j,\lambda}^+\})$ and $I(\{\phi_{j,\lambda}^-\})$ are pointed $(j-1)$-spheres, for $j=1,\dots, N$.

On the other hand, the bifurcations occur at the zero equilibrium, so we could not expect the Conley index to be constant for all $\lambda>0$. In fact, observe that for $\lambda \in (N^2,(N+1)^2)$, $0$ is hyperbolic and $dim W^u(\{0\})=N$. Again, by Corollary II.11.2, \cite{Rybakowski}, a pointed $N$-sphere is the Conley index $I(\{0\})$, for all $\lambda \in (N^2,(N+1)^2)$.

We may also apply Theorem \ref{theo:ContIndex} to conclude that  $I(\{0\})$ is constant, for all $\lambda \in (N^2,(N+1)^2]$.
\end{example}

In what follows, we will show that we can use the information presented in this example in order to calculate the index for the equilibria of \eqref{eq:nl_semilinear}.


\begin{lemma}\label{lemma:cont.equilibria}
	Suppose that $a(0)N^2<\lambda <a(0)(N+1)^2$. Under the same notation of Theorem \ref{theo:exist.equilibria}, let $\phi \in\{\phi_{N,\lambda}^+,\phi_{N,\lambda}^-\}$ and, for $\tau \in [0,1]$, consider
	\begin{equation}\label{eq:nonlocal.tau}
		\left\{	\begin{aligned}
			&u_t = a_{\tau}(\|u_x\|^2) u_{xx}+\lambda f(u),\ x \in (0,\pi), \ t >0,\\
			& u(t,0)=u(t,\pi)=0, \ t \geq 0,\\
			& u(0,\cdot)=u_0 \in H^1_0(0,\pi),
		\end{aligned}\right.
	\end{equation}
	for $a_{\tau}(s)=a(\tau s+(1-\tau)\|\phi_x\|^2)$ and $f$ satisfying the same conditions imposed in \eqref{eq:nonlocal}.	
	
	Denote by $\mathcal{E}^\tau$ the set of equilibria of \eqref{eq:nonlocal.tau}, $\tau \in [0,1]$. Then $\mathcal{E}^\tau$ is a set with $2N+1$ elements, for all $\tau \in [0,1]$. Moreover, we have continuity of equilibria.
\end{lemma}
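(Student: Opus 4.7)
The strategy is to reduce the counting to Theorem \ref{theo:exist.equilibria} applied at each $\tau \in [0,1]$. Since $a_\tau(s) = a(\tau s + (1-\tau)\|\phi_x\|^2)$ is an affine reparametrization of $a$ with non-negative slope, it inherits from $a$ every structural property required by Theorem \ref{theo:exist.equilibria}: it is $C^1(\mathbb{R}^+)$, globally Lipschitz, non-decreasing, and bounded between $m$ and $M$. Hence the theorem applies to \eqref{eq:nonlocal.tau} (which has the same form as \eqref{eq:nonlocal} with $a$ replaced by $a_\tau$) as soon as the parameter inequality $a_\tau(0)N^2 < \lambda < a_\tau(0)(N+1)^2$ is verified for every $\tau$.

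For this parameter condition, observe that $a_\tau(0) = a((1-\tau)\|\phi_x\|^2)$ lies between $a(0)$ (attained at $\tau=1$) and $a(\|\phi_x\|^2)$ (attained at $\tau=0$) by the monotonicity of $a$. The upper inequality $\lambda < a_\tau(0)(N+1)^2$ is then immediate from the hypothesis $\lambda < a(0)(N+1)^2$. For the lower inequality it suffices to show $\lambda > a(\|\phi_x\|^2) N^2$: since $\phi \in \{\phi^+_{N,\lambda}, \phi^-_{N,\lambda}\}$ has $N-1$ interior zeros in $(0,\pi)$ and solves $\phi_{xx} + \mu f(\phi) = 0$ with $\mu = \lambda/a(\|\phi_x\|^2)$, the classical Chafee--Infante phase-plane analysis (\cite{C-I-n2}, \cite{C-I74}) forces $\mu > N^2$ whenever such a solution exists. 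Theorem \ref{theo:exist.equilibria} applied to $a_\tau$ then yields $\#\mathcal{E}^\tau = 2N+1$ for every $\tau \in [0,1]$.

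For the continuity of equilibria I would use the Chafee--Infante shooting parametrization: for each $\mu > j^2$, the Dirichlet problem $u_{xx} + \mu f(u) = 0$ has a unique equilibrium $\psi_j(\mu,\cdot)$ with $j-1$ interior zeros and $\psi_j'(0,\mu)>0$, and the map $\mu \mapsto S_j(\mu) := \|\partial_x \psi_j(\mu,\cdot)\|^2$ is a smooth, strictly increasing bijection from $(j^2,\infty)$ onto $(0,\infty)$. An equilibrium of \eqref{eq:nonlocal.tau} with $j-1$ interior zeros therefore corresponds to a fixed point of $s = S_j(\lambda/a_\tau(s))$. The right-hand side is non-increasing in $s$ (since $a_\tau$ is non-decreasing and $S_j$ is increasing), so the fixed point $s_{j,\tau}$ is unique; moreover, a direct computation shows that the derivative with respect to $s$ of $s - S_j(\lambda/a_\tau(s))$ is bounded below by $1$, so the implicit function theorem delivers the $C^1$-dependence of $s_{j,\tau}$ on $\tau$. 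Composing with the Chafee--Infante parametrization gives continuity of $\tau \mapsto \phi_{j,\tau}^\pm = \pm \psi_j(\lambda/a_\tau(s_{j,\tau}),\cdot)$ in $H^1_0(0,\pi)$.

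The main obstacle is establishing, \emph{uniformly in} $\tau$, both the unique solvability of the fixed-point equation and the a priori bound $\lambda/a_\tau(s_{j,\tau}) > j^2$. Both hinge on the monotonicity of $a$ together with the choice of $\phi$ as a top-level equilibrium $\phi^\pm_{N,\lambda}$, which is what pins $a_\tau(0)$ inside the Chafee--Infante window at every $\tau$. Without the monotonicity assumption, the right-hand side of the fixed-point equation could cross the diagonal several times, producing extra branches of equilibria and destroying the constancy of $\#\mathcal{E}^\tau$ needed for the Conley index continuation carried out in the next section.
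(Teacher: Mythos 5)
Your counting argument is essentially the paper's: both verify that $a_\tau$ inherits the structural hypotheses on $a$, both reduce the lower bound to $\lambda > a(\|\phi_x\|^2)N^2$ via the fact that $\phi=\phi^\pm_{N,\lambda}$ solves the rescaled Chafee--Infante equation with parameter $\lambda/a(\|\phi_x\|^2)>N^2$, and both then sandwich $a_\tau(0)$ between $a(0)$ and $a(\|\phi_x\|^2)$ using monotonicity. For the continuity of equilibria, however, you take a genuinely different route. The paper argues by compactness: given $\tau_n\to\tau_0$, it bounds the equilibria $\psi^{(n)}$ in $H^1_0$ by testing the equation against $\psi^{(n)}$, bootstraps to an $H^2$ bound, extracts a subsequence converging in $H^1_0$ and $C^1$, passes to the limit in the weak formulation, and finally rules out collapse to the zero equilibrium by invoking the monotonicity of $\bar\lambda\mapsto\|\varphi^{\bar\lambda}_x\|$ to get a uniform lower bound $\|\psi_x\|\geq\|\varphi^b_x\|>0$. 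You instead recast each equilibrium as the unique fixed point of $s=S_j(\lambda/a_\tau(s))$ and apply the implicit function theorem. Your route is cleaner and buys more (uniqueness of the branch and $C^1$ dependence on $\tau$, versus mere sequential continuity), but it leans on the assertion that $S_j$ is \emph{smooth}; the paper only uses (and only cites from \cite{CCM-RV21C-Inf}) that this map is increasing, and its compactness argument needs nothing more. If you keep your approach you should either justify the differentiability of $S_j$ (e.g.\ by the implicit function theorem applied to the time map, using $f\in C^2$), or weaken the step to a continuity-plus-strict-monotonicity argument for the fixed point, which suffices for the lemma as stated. The claimed surjectivity of $S_j$ onto $(0,\infty)$ is not needed and is better omitted. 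Finally, your closing remark on the role of monotonicity of $a$ correctly identifies why the construction is anchored at the top-level equilibrium $\phi^\pm_{N,\lambda}$, which is exactly the point of the paper's choice of homotopy.
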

\begin{proof}
The function $a_\tau:\mathbb{R}\to [m,M]$ given by $a_\tau(s)=a(\tau s+(1-\tau)\|\phi_x\|^2)$ is globally Lipschitz continuous and $a_\tau$ is also a non-decreasing $C^1$-function, for each $\tau \in [0,1]$. So, these problems are well-defined and we have a semigroup $\{S_\tau(t): t\geq 0\}$ related to \eqref{eq:nonlocal.tau}, for all $\tau \in [0,1]$.


First, we will show that the cardinal number of  $\mathcal{E}^\tau$ is the same for all $\tau \in [0,1]$. Observe that $a(\|\phi_x\|^2)N^2 <\lambda$, since $\phi$ is an equilibrium for  
\begin{equation*}
	\left\{	\begin{aligned}
		&u_t = u_{xx}+\tfrac{\lambda}{a(\|\phi_x\|^2)} f(u),\ x \in (0,\pi), \ t >0,\\
		& u(t,0)=u(t,\pi)=0, \ t \geq 0,\\
		& u(0,\cdot)=u_0 \in H^1_0(0,\pi).
	\end{aligned}\right.
\end{equation*}

Using that $a$ is non-decreasing, for all $\tau \in [0,1]$, we have $$
a_{\tau}(0)N^2=a((1-\tau)\|\phi_x\|^2)N^2\leq a(\|\phi_x\|^2)N^2<\lambda.
$$
and, on the other hand, $$\lambda\leq a(0)(N+1)^2\leq a_\tau(0)(N+1)^2.$$

Therefore, for each $\tau \in [0,1]$, equation \eqref{eq:nonlocal.tau} has exactly $2N+1$ equilibria.


Suppose that $\{\tau_n\}_{n\in \mathbb{N}}\subset[0,1]$ and $\tau_n\rightarrow \tau_0$. 
Denote $$
\mathcal{E}^{\tau_n}=\left\{\phi_{j,(n)}^{+},\phi_{j,(n)}^{-}:j=0,\dots, N\right\}
$$
with $\phi_{0,(n)}^{+}=\phi_{0,(n)}^{-}=0$, $\phi_{j,(n)}^{\pm}$ have $j+1$ zeros in $[0,\pi]$ and $(\phi_{j,(n)}^{+})_x(0)>0$, for $j=1,\dots, N$.

We will prove the continuity of $\mathcal{E}^\tau$ in terms of the parameter $\tau\in [0,1]$. For that, fix a $j\in \{0,1,\dots,N\}$ and denote by $\psi^{(n)}=\phi_{j,(n)}^{+}$, $n \in \mathbb{N}$. The case $\psi^{(n)}=\phi_{j,(n)}^{-}$ follows analogously. So, for each $n \in \mathbb{N}$,
\begin{equation*}
	a_{\tau_n}(\|(\psi^{(n)})_x\|^2)(\psi^{(n)})_{xx}+\lambda f(\psi^{(n)})=0. 	
\end{equation*}

Multiplying the above equation by $\psi^{(n)}$, we find
$$
\|(\psi^{(n)})_x\|^2\leq \frac{1}{a(0)} \left<\lambda f(\psi^{(n)}),\psi^{(n)}\right> \leq C+\frac{1}{2}\|(\psi^{(n)})_x\|^2,
$$
for some $C>0$, where we have used the conditions on $f$ and Sobolev's embeddings. 

Therefore, this sequence is relatively compact in $L^2(0,\pi)$ and we may assume that 
it is convergent to some $\psi \in L^2(0,\pi)$. Since $H^1_0(0,\pi)\subset C([0,1])$, the sequence $\{\psi^{(n)}\}$ is also bounded in $C([0,1])$ so as the sequence $\{f(\psi^{(n)})\}$, by the continuity of $f$. 

Consequently,
$$
\|(\psi^{(n)})_{xx}\|\leq \tfrac{\lambda}{a(0)}\|f(\psi^{(n)})\|< +\infty
$$
and $\{\psi^{(n)}\}_{n\in\mathbb{N}}$ is bounded in $H^2(0,\pi)$. Since $H^2(0,\pi)$ is compactly embedded in $H^1_0(0,\pi)$, we may assume that the sequence is convergent to some $\tilde{\psi} \in H^1_0(0,\pi)$.

By the uniqueness of the limit and by $H^1(0,\pi)\subset L^2(0,\pi)$, it follows that $\tilde{\psi}=\psi$.

Observe that $\psi^{(n)}\rightarrow {\psi}$ in $C^1([0,\pi])$, since $H^2(0,\pi)\subset C^1([0,1])$ and such embedding is compact. 
The case $\psi^{(n)}=\phi_{j,(n)}^{-}$, $n \in \mathbb{N}$ follows similarly. For any $v \in H^1_0(0,\pi)$ and $n \in \mathbb{N}$, 
$$-a_{\tau_n}(\|(\psi^{(n)})_x\|^2)\left<(\psi^{(n)})_x, v_x\right> + \lambda\left<f(\psi^{(n)}),v\right>=0,$$
hence 
$$
-a_{\tau_0}(\|\psi_x\|^2)\left<\psi_x, v_x\right> +\lambda \left<f(\psi),v\right>=0.
$$

Now, using that $\psi \in H^2(0,\pi)$ since $\{\phi^{(n)}\}_{n \in \mathbb{N}}\rightarrow \psi$ weakly in $H^2(0,\pi)$, we conclude that 
$$
-a_{\tau_0}(\|\psi_x\|^2)\psi_{xx}+\lambda f(\psi)=0.
$$

Therefore, $\psi$ is an equilibrium of \eqref{eq:nonlocal.tau} for $\tau=\tau_0$. 

In the case $\phi^{(n)}=\phi_{0,+}^{(n)}=0$, for all $n \in \mathbb{N}$ we find $\psi=0$ and we are done. So, suppose $j\neq 0$. We only need to show that $\psi\neq 0$. For that, consider $b= \inf_{n \in \mathbb{N}}{\lambda}{[a_{\tau_n}(\|(\psi^{(n)})_x\|^2)]^{-1}}>0$. For any $\bar{\lambda} >N^2$, let $\varphi^{\bar{\lambda}}$ satisfy
$$
\varphi^{\bar{\lambda}}_{xx}+\bar{\lambda} f(\varphi^{\bar{\lambda}})=0
$$
with $\varphi^{\bar{\lambda}}$ having $j+1$ zeros in $[0,\pi]$ and $\varphi^{\bar{\lambda}}_x(0)>0$.

One important result is that the function
$$(n^2,+\infty)\ni\bar{\lambda}\mapsto \|\varphi^{\bar{\lambda}}_x\|$$ 
is increasing, \cite{CCM-RV21C-Inf}. In particular, for $r_n={\lambda}{[a_{\tau_n}(\|(\psi^{(n)})_x\|^2)]^{-1}}$, $n \in \mathbb{N}$, we have that  $0<\|\varphi_x^b\|\leq \|\varphi_x^{r_{n}}\|$. Since $\varphi_x^{r_{n}}=\psi^{(n)}$, $n \in \mathbb{N}$, and $\psi^{(n)}\rightarrow \psi$, we find  $0<\|\varphi_x^b\|\leq \|\psi_x\|$.

We conclude that $\psi = \phi_{j,+}^{\tau_0}$ by the $C^1$ convergence and we have the continuity of equilibria.
\end{proof}

\begin{lemma}\label{lemma:Cont_family}
For any $\tau \in [0,1]$, denote by $\{S_\tau(t): t\geq 0\}$ the semigroup related to \eqref{eq:nonlocal.tau}.

This family of semigroups is continuous, that is, given sequences \\ $\{\tau_n\}_{n \in \mathbb{N}},\tau_0\in [0,1]$, $\{u_0^{(n)}\}_{n\in \mathbb{N}},u_0\in H^1_0(0,\pi)$, $\{t_n\}_{n\in \mathbb{N}},t_0\in \mathbb{R}^+$ satisfying
\begin{equation}\label{eq:conv_seq}
\tau_n \rightarrow \tau_0,\ u_0^{(n)} \rightarrow u_0 \mbox{ in }H^1_0(0,\pi) \mbox{ and }
t_n \rightarrow t_0, \mbox{ as }n \rightarrow +\infty,
\end{equation}
we have $\|S_{\tau_n}(t_n)u_0^{(n)}-S_{\tau_0}(t)u\|_{H^1_0(0,\pi)}\rightarrow 0$ as $n \rightarrow +\infty$. 
\end{lemma}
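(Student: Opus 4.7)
The plan is to prove this in three steps: uniform a priori bounds on the orbits, a Gronwall-type estimate for the difference of solutions at the fixed parameter $\tau_0$, and a triangle inequality to absorb the time shift.

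First, I would establish uniform bounds on $u^{(n)}(\cdot):=S_{\tau_n}(\cdot)u_0^{(n)}$ over an interval $[0,T]$ with $T>t_0$. Since $u_0^{(n)}\to u_0$ in $H^1_0(0,\pi)$, the initial data are bounded. The Lyapunov functional $E_\tau$ obtained from \eqref{energy} by replacing $a$ with $a_\tau$ is non-increasing along orbits of \eqref{eq:nonlocal.tau} (the proof in \cite{CM-RVexist_charac} transfers verbatim, since $a_\tau$ satisfies the same structural hypotheses as $a$ uniformly in $\tau$), and, combined with the dissipativity of $f$, this gives $\|u^{(n)}(t)\|_{H^1_0}\leq K$ on $[0,T]$ uniformly in $n$. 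Testing the equation with $-u^{(n)}_{xx}$ in $L^2$ and integrating in time gives additionally $\int_0^T\|u^{(n)}_{xx}(t)\|^2\,dt\leq K$ uniformly in $n$.

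Second, I would show that $u^{(n)}\to u$ in $C([0,T];H^1_0)$, where $u(t):=S_{\tau_0}(t)u_0$. The difference $w^{(n)}:=u^{(n)}-u$ solves
\begin{equation*}
w^{(n)}_t = a_{\tau_0}(\|u_x\|^2)\,w^{(n)}_{xx} + \bigl[a_{\tau_n}(\|u^{(n)}_x\|^2)-a_{\tau_0}(\|u_x\|^2)\bigr]u^{(n)}_{xx} + \lambda\bigl[f(u^{(n)})-f(u)\bigr],
\end{equation*}
and the Lipschitz continuity of $a$ combined with Step~1 yields $|a_{\tau_n}(\|u^{(n)}_x\|^2)-a_{\tau_0}(\|u_x\|^2)|\leq C(|\tau_n-\tau_0|+\|w^{(n)}_x\|)$. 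Testing with $-w^{(n)}_{xx}$ in $L^2$, absorbing a fraction of $a_{\tau_0}\|w^{(n)}_{xx}\|^2$ on the left via Young's inequality, and handling the $f$-difference via the mean value theorem together with Sobolev embedding, one reaches
\begin{equation*}
\frac{d}{dt}\|w^{(n)}_x\|^2 \leq C\bigl(1+\|u^{(n)}_{xx}\|^2\bigr)\|w^{(n)}_x\|^2 + C|\tau_n-\tau_0|^2\,\|u^{(n)}_{xx}\|^2.
\end{equation*}
Since $\|u^{(n)}_{xx}\|^2$ lies in $L^1(0,T)$ uniformly in $n$, Gronwall's lemma yields the desired $C([0,T];H^1_0)$-convergence.

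Third, the time shift is handled by the triangle inequality
\begin{equation*}
\|S_{\tau_n}(t_n)u_0^{(n)}-S_{\tau_0}(t_0)u_0\|_{H^1_0} \leq \|u^{(n)}(t_n)-u(t_n)\|_{H^1_0} + \|u(t_n)-u(t_0)\|_{H^1_0};
\end{equation*}
for $n$ large $t_n\in[0,T]$, so the first summand is bounded by $\sup_{[0,T]}\|w^{(n)}\|_{H^1_0}\to 0$ by Step~2, and the second vanishes by continuity of $t\mapsto S_{\tau_0}(t)u_0$ in $H^1_0$. The main obstacle in this scheme is precisely the quasilinear coupling introduced by $a_{\tau_n}(\|u^{(n)}_x\|^2)$, which ties the $H^1_0$- and $H^2$-norms in the error equation and rules out a pure contraction-based Gronwall argument on a single norm. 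The fix is to split off $a_{\tau_0}(\|u_x\|^2)$ as the base diffusion in the difference equation and treat the gradient-dependent variation as a perturbation whose effect is controlled by the uniform $L^1(0,T)$-integrability of $\|u^{(n)}_{xx}\|^2$ from Step~1.
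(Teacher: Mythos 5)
Your argument is correct, but it takes a genuinely different route from the paper. The paper works with the mild (variation-of-constants) formulation built on the sectorial operator $Au=u_{xx}$: it bounds the combined nonlinearity $f(u)/a_{\tau}(\|u_x\|^2)$ by $C\bigl(\|u_n-u\|_{H^1_0}+|\tau_n-\tau_0|\bigr)$, uses the smoothing estimates $\|e^{At}u\|_{X^{\gamma}}\leq t^{-\gamma}e^{-t}\|u\|$ to handle both the time shift and the difference of orbits, and closes with a singular Gronwall inequality (Lemma 6.24 of \cite{CLRLibro}); in effect it argues on the reparameterized semilinear problem \eqref{eq:nl_semilinear}. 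You instead stay with the quasilinear equation \eqref{eq:nonlocal.tau} and run a direct $H^1_0$ energy estimate on the difference, freezing the diffusion at $a_{\tau_0}(\|u_x\|^2)$ and paying for the coefficient variation with the term $\|u^{(n)}_{xx}\|^2$, which you control through the uniform bound $\int_0^T\|u^{(n)}_{xx}\|^2\,dt\leq K$ obtained from the energy identity; an ordinary Gronwall lemma with an $L^1$-in-time coefficient then closes the estimate, and the time shift is absorbed by the triangle inequality exactly as in the paper. Your version is more elementary (no fractional power spaces, no singular Gronwall) and has the merit of treating the quasilinear semigroup $S_\tau$ itself rather than its time-reparameterization, at the cost of the extra a priori step producing the $L^2(0,T;H^2)$ bound (which should be justified by Galerkin approximation or by instantaneous regularization, since $u_0^{(n)}$ is only in $H^1_0$); the paper's mild-solution route avoids that bound entirely because the analytic-semigroup smoothing supplies the needed regularity. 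Both arguments are sound and yield the same conclusion.
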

\begin{proof}
For any $t>0$, denote $u(t)=S_{\tau_0}(t)$ and $u_n(t)=S_{\tau_n}(t)$, $n \in \mathbb{N}$.

Consider $A: D(A)\subset L^2(0,\pi)\to L^2(0,\pi)$, the operator 
given by $Au=u_{xx}$, for $u \in D(A)=H^2(0,\pi)\cap H^1_0(0,\pi)$, and define the problem
$$
\left\{\begin{aligned}
	&u_t = Au, \ x \in (0,\pi), \ t >0, \\
	& u(t,0)=u(t,\pi)=0, \ t \geq 0,\\
	& u(0,\cdot)=u_0 \in H^1_0(0,\pi).
\end{aligned}\right.
$$
Since $-A$ is sectorial, see \cite{CLRLibro}, the above problem defines a semigroup $\{e^{At}: t\geq 0 \}$, for $X=L^2(0,\pi)$. We can consider $X^\gamma$, $\gamma \in (0,1]$, as the fractional powers defined by $A$ (see Section 6.4.2, \cite{CLRLibro}, for more details). We also have the following inequalities:
\begin{equation}\label{eq:ineq_A}
\begin{aligned}
	&\|e^{At}u\|_{X^\gamma}\leq e^{-t}\|u\|_{X^\gamma}, \mbox{ for all } t\geq 0,\\
	&\|e^{At}u\|_{X^\gamma}\leq t^{-\gamma}e^{-t}\|u\|, \mbox{ for all } t\geq 0.
\end{aligned}
\end{equation}

 By the formula of variation of constants, 
$$
u_n(t)=e^{At}u_0^{(n)}+\int_0^{t}e^{A(t-s)}\frac{f(u_n(s))}{a_{\tau_n}(\|(u_n)_x(s)\|^2)}ds
$$
and
$$
u(t)=e^{At}u_0+\int_0^{t}e^{A(t-s)}\frac{f(u(s))}{a_{\tau_0}(\|u_x(s)\|^2)}ds.
$$

We may assume, without loss of generality that $\{t_n\}_{n\in \mathbb{N}}\subset [0,T]$, for some $T \in \mathbb{R}^+$ and $t_n> t_0$ for $n$ sufficiently large. We may define $$
C_f=\sup_{n=0,1,\dots}\{f(v(x)): v \in S_{\tau_n}([0,t])u_n, x \in [0,\pi]\}<+\infty.
$$

To show that this is true, we can use comparison with $g(u)=f(u)/a(0)$, $u \in \mathbb{R}$. See, for instance, Theorem 6.41 in \cite{CLRLibro}.

Now,
$$\begin{aligned}
\|u(t_n)-u(t_0)\|_{H^1_0(0,\pi)}\! &\!\leq \|(e^{A(t_n-t_0)}-I)u(t_0)\|_{H^1_0(0,\pi)}\\
&+\! \int_{t_0}^{t_n}\! \left\|e^{A(t_n-s)}\frac{f(u(s))}{a_{\tau_0}(\|u_x(s)\|^2)}\right\|_{H^1_0(0,\pi)}\hspace{-0.3 cm}ds\\
& \leq \|(e^{A(t_n-t_0)}-I)u(t_0)\|_{H^1_0(0,\pi)}\\ &+ \frac{C_f\pi^\frac{1}{2}}{m}\int_{t_0}^{t_n} e^{-(t_n-s)}(t_n-s)^{-\frac{1}{2}} ds.
\end{aligned}
$$

We can see that, assuming $|t_n-t_0|<1$, we find
$$
\begin{aligned}
\int_{t_0}^{t_n}\! e^{-(t_n-s)}(t_n-s)^{-\frac{1}{2}} du &=\!\int_{0}^{t_n-t_0}\! e^{-u}u^{-\frac{1}{2}} ds\\ &=\!2e^{-(t_n-t_0)}(t_n-t_0)^\frac{1}{2}+\!\int_{0}^{t_n-t_0}\! 2e^{-u}u^{\frac{1}{2}} ds\\
&\leq 2(t_n-t_0)^\frac{1}{2}+ 2(t_n-t_0).
\end{aligned}
$$

Hence 
\begin{equation}\label{eq:cont_u}
\|u(t_n)-u(t_0)\|_{H^1_0(0,\pi)}\leq \|(e^{A(t_n-t_0)}-I)u(t_0)\|_{H^1_0(0,\pi)}+ 4\frac{C_f\pi^\frac{1}{2}}{m}(t_n-t_0)^\frac{1}{2}.
\end{equation}

Also
$$\begin{aligned}
\left\|\frac{f(u_n(s))}{a_{\tau_n}(\|(u_n)_x(s)\|^2)}-\frac{f(u(s))}{a_{{\tau_0}}(\|u_x(s)\|^2)}\right\|
&
\leq \frac{a_{{\tau_0}}(\|u_x(s)\|^2)}{m^2} \left\|{f(u_n(s))} - {f(u(s))}\right\| \\
&\quad + \frac{I(s)}{m^2} \left\|{f(u(s))}\right\|,
\end{aligned}
$$
for $I(s) = |a_{\tau_0}(\|u_x(s)\|^2) - a_{\tau_n}(\|(u_n)_x(s)\|^2)|$.

Now, since $a$ is globally Lipschitz, there is $C_a>0$ such that
$$
\begin{aligned}
I(s) &= \left|a\!\left(\tau_0\|u_x(s)\|^2+(1-\tau_0)\|\phi_x\|^2\right) - a\!\left(\tau_n\|(u_n)_x(s)\|^2+(1-\tau_n)\|\phi_x\|^2\right)\right|\\
&\leq C_a|\tau_0\|u_x(s)\|^2 - \tau_n\|(u_n)_x(s)\|^2 +(\tau_n-\tau_0)\|\phi_x\|^2|\\
&\leq C_a|\tau_0-\tau_n|\left(\|u_x(s)\|^2 +\|\phi_x\|^2\right)+ \tau_n\|(u_n)_x(s)-u_x(s)\|^2.
\end{aligned}
$$

Thus, for some constant $C>0$,
$$
\left\|\frac{f(u_n(s))}{a_{\tau_n}(\|(u_n)_x(s)\|^2)}-\frac{f(u(s))}{a_{{\tau_0}}(\|u_x(s)\|^2)}\right\|\leq C\left[\|u_n(s)-u(s)\|_{H^1_0(0,\pi)}+|\tau_n-\tau_0|\right].
$$

Finally, using that $X^{\frac{1}{2}}=H^1_0(0,\pi)$ and \eqref{eq:ineq_A}, we have
$$
\begin{aligned}
	\|u_n\!(r)-u(r)\|_{H^1_0(0,\pi)\!}&\leq \|e^{Ar}(u_0^{(n)}-u_0)\|_{H^1_0(0,\pi)}\\ &+\!\int_0^{r}\!\left\|e^{A(r-s)}\!\left[\frac{f(u_n(s))}{a_{\tau_n}\!(\|(u_n)_x(s)\|^2)}-\frac{f(u(s))}{a_{\tau}\!(\|u_x(s)\|^2)}\right]\!\right\|_{H^1_0(0,\pi)}\!\!\!\!\!\!\!\!\!\! ds\\
	&\leq e^{-r}\|u_0^{(n)}\!-u_0\|_{H^1_0(0,\pi)}+C|\tau_n\!-\tau_0|\!\!\int_0^{r}\!\!\!\! e^{-(r-s)}\!(r-s)^{-\frac{1}{2}}\!ds\\
	&+\int_0^{r}Ce^{-(r-s)}\!(r-s)^{-\frac{1}{2}}\left\|u_n(s)-u(s)\right\|_{H^1_0(0,\pi)}ds.
\end{aligned}
$$

Taking $\psi_n(s)=e^{s}\left\|u_n(s)-u(s)\right\|_{H^1_0(0,\pi)}$, we find
$$
\psi_n(r)\leq a^n+\int_0^{r}C(r-s)^{-\frac{1}{2}}\psi_n(s)ds, 
$$
for $a^n=\|u_0^{(n)}-u_0\|_{H^1_0(0,\pi)}+ Ce^T|\tau_n-\tau_0|\Gamma(\tfrac{1}{2})$, where $\Gamma(\cdot)$ represents the Gamma function. By \cite[Lemma 6.24]{CLRLibro}, taking $K=(2C\Gamma(\tfrac{1}{2}))^2$, we have, for all $r \in [0,T]$,
$$
\psi_n(r)\leq 2a^ne^{Kr}.
$$

Hence, for all $r \in [0,T]$,
$$\left\|u_n(r)-u(r)\right\|_{H^1_0(0,\pi)}\leq 2\bar{C}e^{(K-1)r}\left(\|u_0^{(n)}-u_0\|_{H^1_0(0,\pi)} + e^T|\tau_n -\tau_0|\Gamma(\tfrac{1}{2})\right).$$

Now,
$$
\begin{aligned}
\|u_n\!(t_n)-u(t_0)\|_{H^1_0(0,\pi)}&\leq \|u(t_n)-u(t_0)\|_{H^1_0(0,\pi)}+\|u_n(t_n)-u(t_n)\|_{H^1_0(0,\pi)} \\ 
&\leq \|u(t_n)-u(t_0)\|_{H^1_0(0,\pi)}\!\!+2\bar{C}e^{(K-1)t_n}\|u_0^{(n)}\!-u_0\|_{H^1_0(0,\pi)}\\
&\quad +2\bar{C}e^{(K-1)t_n}e^T\Gamma(\tfrac{1}{2})|\tau_n -\tau_0|. 
\end{aligned}
$$

By \eqref{eq:conv_seq} and \eqref{eq:cont_u}, it follows that $\|u_n(t_n)-u(t)\|_{H^1_0(0,\pi)}\rightarrow 0$ as $n\rightarrow +\infty$.
\end{proof}


\begin{theorem}\label{theo:Conley_index}
	Suppose that $a(0)N^2<\lambda <a(0)(N+1)^2$, for $N \in \mathbb{N}$. Under the same notation of Theorem \ref{theo:exist.equilibria}, we have that the Conley index of $\phi_{j}^\pm$ is well-defined and $I(\{\phi_{j}^+\})$ and $I(\{\phi_{j}^-\})$ are pointed $(j-1)-$spheres, for $j=1,\dots,N$. Also, $I(\{0\})$ is a pointed $N-$sphere.
\end{theorem}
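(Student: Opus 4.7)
The plan is to deform \eqref{eq:nonlocal} to the classical Chafee-Infante equation along the homotopy \eqref{eq:nonlocal.tau} and then invoke the continuation property (Theorem~\ref{theo:ContIndex}). Fix once and for all a reference equilibrium $\phi\in\{\phi_{N}^{+},\phi_{N}^{-}\}$ and let $\{S_\tau(t):t\geq 0\}$, $\tau\in[0,1]$, denote the family of semigroups associated with \eqref{eq:nonlocal.tau}. At $\tau=0$ the diffusion coefficient collapses to the constant $a(\|\phi_x\|^{2})$, so a linear time rescaling transforms \eqref{eq:nonlocal.tau} into the Chafee-Infante problem \eqref{eq:C-I} with effective parameter $\mu=\lambda/a(\|\phi_x\|^{2})$. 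Since $a$ is non-decreasing, $\lambda<a(0)(N+1)^{2}\leq a(\|\phi_x\|^{2})(N+1)^{2}$, and $a(\|\phi_x\|^{2})N^{2}<\lambda$ (as noted in the proof of Lemma~\ref{lemma:cont.equilibria}), we get $\mu\in(N^{2},(N+1)^{2})$. Example~\ref{exemp:ChaInfConleyIndex} therefore furnishes the Conley indices at $\tau=0$: a pointed $N$-sphere at the origin and a pointed $(j-1)$-sphere at each $\phi_{j,\mu}^{\pm}$.

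By Lemma~\ref{lemma:cont.equilibria} every equilibrium continues along a continuous curve $\tau\mapsto\psi_\tau$, and the $C^{1}$-convergence preserves the number of zeros, so $\psi_{0}$ and $\psi_{1}$ carry the same label $(j,\pm)$ (or both coincide with $0$). Define $\alpha:[0,1]\to\mathcal{S}(H_{0}^{1}(0,\pi))$ by $\alpha(\tau)=(\{\psi_\tau\},S_\tau(\cdot))$; showing that $\alpha$ is $\mathcal{S}$-continuous and applying Theorem~\ref{theo:ContIndex} will finish the argument. Item iii) of Definition~\ref{def:alpha_cont} is exactly Lemma~\ref{lemma:Cont_family}, while item ii) follows from the uniform bounds $m\leq a_\tau\leq M$ together with the smoothing effect of the Laplacian and the compact embedding $H^{2}(0,\pi)\hookrightarrow H_{0}^{1}(0,\pi)$, which yield collective asymptotic compactness of the family $\{S_\tau(t):t\geq 0,\ \tau\in[0,1]\}$.

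The main obstacle is item i): producing, for each $\tau_{0}\in[0,1]$, a single closed set $\mathcal{U}\subset H_{0}^{1}(0,\pi)$ that isolates $\psi_\tau$ under $S_\tau$ for every $\tau$ in a neighborhood $W$ of $\tau_{0}$. I will take $\mathcal{U}=\overline{B}_{r}(\psi_{\tau_{0}})$ with $r>0$ so small that $\mathcal{U}\cap\mathcal{E}^{\tau}=\{\psi_\tau\}$ for all $\tau\in W$, which is possible because $\mathcal{E}^{\tau}$ has constantly $2N+1$ elements depending continuously on $\tau$. To check that $\{\psi_\tau\}$ is the largest invariant set in $\mathcal{U}$ without appealing to hyperbolicity (which is not available for \eqref{eq:nonlocal.tau} at arbitrary $\tau$), I will use the gradient structure: the Lyapunov functional analogous to \eqref{energy} strictly decreases along non-equilibrium trajectories of $S_\tau$, hence any globally defined bounded solution contained in $\mathcal{U}$ has $\alpha$- and $\omega$-limits made of equilibria inside $\mathcal{U}$; since $\psi_\tau$ is the only such equilibrium, both limits coincide and the strict monotonicity of the energy forces the trajectory to be constantly equal to $\psi_\tau$. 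With $\alpha$ thus $\mathcal{S}$-continuous, Theorem~\ref{theo:ContIndex} transports the Conley indices computed at $\tau=0$ to $\tau=1$, yielding the announced values.
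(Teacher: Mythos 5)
Your proposal is correct and follows essentially the same route as the paper: the same homotopy $a_\tau$, continuation of the Conley index via Theorem~\ref{theo:ContIndex}, with item iii) supplied by Lemma~\ref{lemma:Cont_family}, item ii) by the smoothing/variation-of-constants bound in fractional power spaces, and item i) by small closed balls that contain exactly one equilibrium for nearby $\tau$ (the paper's sets $N_\tau$, $V_\tau$). Your explicit use of the Lyapunov functional to verify that the ball's maximal invariant set reduces to the equilibrium is a sound justification of a step the paper only asserts.
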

\begin{proof} Consider the family of semigroups presented in Lemma \ref{lemma:cont.equilibria} and fix $j=1,\dots, n$. Just to fix the notation, for each $\tau \in [0,1]$, denote by $\phi_{j,\tau}^+$ the equilibrium in $\mathcal{E}^\tau$ satisfying $\phi_{j,\tau}^+(0)=\phi_{j,\tau}^+(\tfrac{\pi}{j})=0$ and $\phi_{j,\tau}^+(x)>0$ in $(0,\tfrac{\pi}{j})$. 

Observe that $\phi_{j}^\pm=\phi_{j,1}^\pm$. We will calculate $I(\{\phi_{j}^+\})$, for  $j=1,\dots, n$. The case $\phi_{j,\tau}^-$ follows similarly.

Define $$
\begin{aligned}
	\alpha: [0,1]&\to \mathcal{S}(X)\\
	\tau&\mapsto \alpha(\tau) =[\{\phi_{j,\tau}^+\}, S_\tau(\cdot)].
\end{aligned}
$$

We want to show that $\alpha$ is $\mathcal{S}$-continuous.

For each $\tau \in [0,1]$, consider $\delta_\tau = \frac{1}{2} \inf \{ \|\psi_x -\varphi_x\|: \psi, \varphi \in \mathcal{E}^\tau,  \psi \neq \varphi\}>0$, which is well-defined since each equilibrium is isolated for $\lambda \in (a(0)N^2, a(0)(N+1)^2)$. 

Also define, for each $\tau \in [0,1]$,
$$
N_\tau = \{u \in  H^1_0(0,\pi): \|u-\phi_{j,\tau}^+\|_{H^1_0(0,\pi)}\leq\delta_\tau\}
$$
and 
$$
\begin{aligned}
V_\tau =
\{(c_\tau, d_\tau)\cap [0,1]: &\ \tau \in (c_\tau, d_\tau)\cap [0,1] \mbox{ and } N_\tau\cap \mathcal{E}^\sigma =\{\phi_{j,\sigma}^+\}\subset int(N_\tau),\\ &\mbox{ for all } \sigma \in (c_\tau, d_\tau)\cap [0,1] \}.
\end{aligned}
$$

The above sets are well-defined, by the continuity of the equilibria in the parameter $\tau$, see Lemma \ref{lemma:cont.equilibria}. Thus, $N_{\tau}$ is an isolating neighborhood of $\phi_{j,\sigma}^+$, for all $\sigma \in V_\tau$, $\tau \in [0,1]$. %
%

By Lemma \ref{lemma:Cont_family}, it only remains to show  for any $\tau_0 \in [0,1]$ that $N_{\tau_0}$ is admissible for every convergent sequence $ \{\tau_n\}_{n\in \mathbb{N}}\in V_\tau$, in the sense of Definition \ref{def:alpha_cont}, point ii).
%


Consider sequences $\{\tau_n\}_{n\in \mathbb{N}} \in [0,1]$ with $\tau_n\rightarrow \tau_0$ and $\{t_n\}_{n\in \mathbb{N}}\subset \mathbb{R}^+$ such that $t_n\rightarrow +\infty$. Suppose that, for each $n \in \mathbb{N}$, we find a solution $\psi_n:\mathbb{R}^+\to X$ of $\{S_{\tau_n}(t): t\geq 0\}$ satisfying $\psi_n([0,t_n])\subset N_{\tau_0}$. We want to prove that $\{\psi_n(t_n)\}_{n\in \mathbb{N}}$ is a convergent subsequence.


For each $n \in \mathbb{N}$, we make the change of variable in order to find a solution $w_n(\alpha_n(t))=\psi_n(t)$ of \eqref{eq:nl_semilinear}, where $\alpha_n(t)=\int_0^{t}a_{\tau_n}(\|(\psi_n)_x(r)\|^2)dr$, for $t \in [0,t_n]$. 

The function $\alpha_n$ depends on $\psi_n$ and $\alpha_n(0)=0$, thus $w_n([0,\alpha_n(t_n)])\subset N_{\tau_0}$. By the formula of variation of constants,
$$
w_n(\alpha_n(t_n))=e^{A\alpha(t_n)}\psi_n(0)+\int_0^{\alpha_n(t_n)} e^{-A(\alpha(t_n)-s)}\frac{\lambda f(w_n(s))}{a_{\tau_n}(\|(w_n)_x(s)\|^2)}ds
$$

Then, for $\gamma \in (\frac{1}{2},1]$, we get
$$\begin{aligned}
\|w_n(\alpha_n(t_n))\|_{X^\gamma}\! \leq\! \|e^{A\alpha(t_n)}\!\psi_n(0)\|_{X^\gamma}\! +\! \int_0^{\alpha_n(t_n)}\!\frac{\lambda\left\| e^{A[\alpha(t_n)-s]}f(w_n(s))\right\|_{X^\gamma}}{a_{\tau_n}(\|(w_n)_x(s)\|^2)}\! ds\\
 \leq [{\alpha(t_n)}]^{\frac{1}{2}-\gamma}\|(\psi_n)_x(0)\|\! +\! \int_0^{\alpha_n(t_n)}\! \tfrac{\lambda[\alpha(t_n)-s]^{-\gamma}}{a_{\tau_n}(\|w_x(s)\|^2)}e^{-(\alpha(t_n)-s)}\|f(w_n(s))\|ds.
\end{aligned}
$$
Using that $f$ is continuous, $m \leq a_{\tau_n}(t)$, for all $t \in \mathbb{R}$, and $H^1_0(0,\pi)\subset L^\infty(0,\pi)$, we find a constant $C>0$ such that
$$\begin{aligned}
	\|w(\alpha_n(t_n))\|_{X^\gamma} & \leq {\alpha(t_n)}^{\frac{1}{2}-\gamma}\delta +\frac{C}{m} \int_0^{\alpha_n(t_n)} e^{-(\alpha(t_n)-s)}[\alpha(t_n)-s]^{\frac{1}{2}-\gamma} ds \\
	& \leq {\alpha(t_n)}^{\frac{1}{2}-\gamma}\delta +\frac{C}{m} \int_0^{+\infty} e^{-\tau}\tau^{\frac{1}{2}-\gamma} d\tau\\
\end{aligned}
$$	

Since $\gamma-\frac{1}{2}>0$, we find $\tilde{M}>0$ such that 
\begin{equation}\label{eq:bounded_frac_space}
\|w(\alpha_n(t_n))\|_{X^\gamma}\leq \tilde{M},
\end{equation}
for all $n \in \mathbb{N}$.

Therefore, the sequence $\{w(\alpha_n(t_n))=\psi(t_n)\}_{n \in \mathbb{N}}$ is pre-compact in $H^1_0(0,\pi)$.

Now, $\psi(t_n)\in N_{\tau_0}$, for all $n \in \mathbb{N}$, and $N_{\tau_0}$ is closed, so we find a subsequence $\{\psi(t_n)\}$ which is convergent to some point in $N_{\tau_0}$.

We conclude that $\alpha$ is $S$-continuous.

The proof for $\phi_j^-$ and $0$ is similar.

Using Theorem \ref{theo:ContIndex} and Example \ref{exemp:ChaInfConleyIndex}, for $\tau=0$, we conclude that, for $\lambda \in (a(0)N^2,a(0)(N+1)^2)$, $I(\{0\})$ is a pointed $N$-sphere and $I(\{\phi_{j}^+\})$ and $I(\{\phi_{j}^-\})$ are pointed $(j-1)$-spheres, for $j=1,\dots, N$.
\end{proof}

\section{Structure of the global attractor}

For our purposes, we need to present the concept of a connection matrix for a Morse decomposition. This theory was developed by Franzosa in \cite{Franzosa86IF,Franzosa89CM}. Later, the author also developed a concept of transition matrix, see \cite{Franzosa88Cont,FranMisc98}. In essence, the connection and transition matrices appear as a topological approach in order to respond to whether there are connections between Morse sets in a Morse decomposition.

Suppose we have a semigroup $\{S(t):t\geq 0\}$ and consider $K$, an isolated invariant set of $X$.

\begin{definition}
	Given $n \in \mathbb{N}$, a family 
	\[
	M(K)=\{M(1), M(2),\dots, M(n)\}
	\]
	is called a \emph{Morse decomposition} of $K$ if:
	
	\begin{itemize}
		\item[i)] $M(j)$ is an isolated set, for all $j=1,\dots, n$;
		
		\item[ii)] For any global solution $\xi :\mathbb{R}\rightarrow X$ with $\xi (%
		\mathbb{R})\subset K$ we have that either $\xi (\mathbb{R})\subset M(j)$,
		for some $j$, or $\xi \left( \text{\textperiodcentered }\right) $ satisfies 
		\[
		M(k)\overset{t\rightarrow -\infty }{\longleftarrow }\xi (t)\overset{%
			t\rightarrow +\infty }{\longrightarrow }M(j),
		\]%
		for $k>j$.
	\end{itemize}
\end{definition}

\begin{remark}
Instead of indexing the Morse sets in an ``interval'' of $\mathbb{N}$, we can index it in any set that admits a partial order. Consider the ordered pair $(P, <)$ such that $P$ is a set and $<$ is a partial order defined in $P$. $I\subset P$ is an \emph{ interval } if $a,b \in I$ and $c \in P$ such that $a<c<b$ implies that $c \in I$. Given two partial orders $<'$ and $<$ in $P$, we say that $<'$ is an extension of $<$ if, for $a,b \in P$, $a<b$ implies $a<'b$. 
\end{remark}

Consider that we find a Morse decomposition $\{M(\theta):\theta \in P\}$ for some isolated set $K$ under the action of $\{S(t):t\geq 0\}$. The partial order $<$ defined in $P$ gives rise to what we call \emph{admissible order} in $K$: we say that $\theta < \theta'$ if $M(\theta)$ appears before $M(\theta')$ in the Morse decomposition.

For $\theta,\theta' \in P$, we say $\theta <_{F} \theta'$, if there are $\theta_j \in P$ and global solutions $\xi_j:\mathbb{R}\to X$ such that
$$M(\theta_{j+1})\stackrel{t\rightarrow -\infty}{ \longleftarrow } \xi_j(t)\stackrel{t\rightarrow +\infty}{ \longrightarrow }
M(\theta_j),\quad 0\leq j\leq n+1,$$
$\theta_0=\theta$ and $\theta_{n+1}=\theta'$, for some  $n \in \mathbb{N}$. 

Now, $<_{F}$ is called the \emph{flow order}. Also, any admissible order is an extension of the flow order.

We will present the concept of connection matrix applied to our context. For more details on the general theory of connection matrices and the fact that we can specify it as we will do, see \cite{Franzosa89CM}.

Consider a Morse decomposition $\mathcal{M}=\{M(\pi ):\pi \in P\}$ related
to a partial order $<$. Denote by $\{H^{\ast }(\pi )\}_{\pi \in P}$ a
collection of graded modules, where $H^{\ast }(\pi )$ represents the
homology chain of the $\mathbb{Z}$-modules associated to $M(\pi )$, $\pi \in
P$. Recall that the connection matrix is a linear map defined on the graded
modules generated by the sum of the elements in $\{H^{\ast }(\pi )\}_{\pi
	\in P}$ such that the homology index braid generated by $\Delta $ is
isomorphic to the homology index braid of the Morse decomposition (see \cite{Franzosa89CM} for more details). Hence, we define the linear map 
\[
\Delta :\bigoplus_{\pi \in P}H^{\ast }(\pi )\rightarrow \bigoplus_{\pi \in
	I}H^{\ast }(\pi ),
\]%
which can be written as a matrix operator $\Delta =\left( \Delta _{\pi ,\pi
	^{\prime }}\right) _{\pi ,\pi ^{\prime }\in P}$. In \cite{Franzosa89CM} it is
proved that such connection matrix always exists. In addition, it satisfies
the following properties:

\begin{itemize}
	\item[i)] $\Delta $ is an upper triangular matrix, that is, $\Delta _{\pi
		,\pi ^{\prime }}=0$ if $\pi ^{\prime }<\pi $.
	
	\item[ii)] $\Delta $ is a boundary map, that is, $\Delta ^{2}=0$ and $\Delta 
	$ has degree $-1$.
	
	\item[iii)] If $<$ is the flow order $<_{F}$, $\Delta _{\pi ,\pi ^{\prime
	}}\neq 0$ and $\{\pi ,\pi ^{\prime }\}$ is an interval, there is a global
	solution $\xi :\mathbb{R}\rightarrow X$ satisfying 
	\[
	M(\pi ^{\prime })\overset{t\rightarrow -\infty }{\longleftarrow }\xi (t)%
	\overset{t\rightarrow +\infty }{\longrightarrow }M(\pi ).
	\]
\end{itemize}

If we denote by $\Delta _{I}$ the restriction of the map $\Delta $ to any
interval $I$, then all the above properties are also satisfied.

In the sequel, we will always refer to a connection matrix related to the
flow order.

In \cite{Mischaikow95}, the author shows that problems satisfying some conditions have an attractor with the same structure of the Chafee-Infante problem. The conditions for such problems are the following:

	\textbf{(A1)} Consider a sequence $\{\lambda_n\}_{n\in \mathbb{N}} \in \mathbb{R}^+$, with $\lambda_n<\lambda_{n+1}$, for all $n \in \mathbb{N}$. Suppose that we can define a continuous parameterized family of semiflows $\varphi_\lambda :\mathbb{R}^+\times X  \to X $, for $\lambda \in \mathbb{R}^+$.
		
For each $\lambda >0$, we also assume that $\varphi_\lambda$ has a global attractor $\mathcal{A}_\lambda$ and the map
	$$
	\varphi_\lambda: \mathbb{R}\times \mathcal{A}_\lambda\to \mathcal{A}_\lambda
	$$
defines a flow.
\smallskip

\textbf{(A2)} For each $\lambda \in (\lambda_n,\lambda_{n+1})$, the attractor $\mathcal{A}_\lambda$ admits a Morse decomposition 
$$
M_\lambda(\mathcal{A}_\lambda)=\left\{M_\lambda(j^\star): j \in \{0,\dots,N-1\}, \star \in\{+,-\}\right\}\cup \left\{M_\lambda(n)\right\}.
$$

$M_\lambda(n)$ represents the zero equilibrium for $\lambda \in (\lambda_n,\lambda_{n+1})$. Moreover,
	$$\begin{aligned}
		&j^\pm< k^\pm \mbox{ for } j,k \in \{0,\dots,N-1\}\iff  j<k \mbox{ in }\mathbb{N},\\
		&j^\pm < N, \mbox{ for all } j \in \{0,\dots,N-1\}.
	\end{aligned}
	$$ 

is an admissible order.

\smallskip

\textbf{(A3)} We assume that we have the following homology index, for $\lambda \in (\lambda_n, \lambda_{n+1})$:
$$
H^k(M_\lambda(j^\star))\simeq \begin{cases}
	\mathbb{Z}, \mbox{ if } k=j, \\
	0, \mbox{ otherwise},
\end{cases}
\quad\mbox{and}\quad
H^k(M_\lambda(n))\simeq \begin{cases}
	\mathbb{Z}, \mbox{ if } k=n, \\
	0, \mbox{ otherwise,}
\end{cases}
$$
for $j \in \{0,\dots, n-1\}$ and $\star \in \{+,-\}$.
\smallskip

\textbf{(A4)} For $\lambda \in (\lambda_n, \lambda_{n+1})$, the connection matrix is given by 
$$
\Delta_\lambda = \begin{bmatrix}
	  0 &D_1^\lambda& 0   &	  \dots   &         0 \\
	    &0	& D_2^\lambda & \ddots      &  \vdots \\
 \vdots &   & \ddots    &\ddots& 0 \\
       &   && 0    & D_{n}^\lambda    \\
	   0&  &    &\dots & 0 \\	 
\end{bmatrix}
$$
where the submatrices 
$$ D_j^\lambda: H^{j}(M_\lambda(j^-))\oplus  H^j(M_\lambda(j^+))\to  H^{j-1}(M_\lambda((j-1)^-))\oplus  H^{j-1}(M_\lambda((j-1)^+))
$$
can be written as $D_j^\lambda=\begin{bmatrix}
	1&1\\
	-1&-1
\end{bmatrix}$
and
$$ D_{n}^\lambda: H^n(M_\lambda(n)) \to H^{n-1}(M_\lambda((n-1)^-))\oplus  H^{n-1}(M_\lambda((n-1)^+)) $$
can be written as $D_{n}^\lambda=\begin{bmatrix}
	1\\
	-1
\end{bmatrix}$.

Our goal in this section is to show that all the properties \textbf{(A1)}-\textbf{(A4)} are valid for \eqref{eq:nonlocal}. Consequently, the attractor of \eqref{eq:nonlocal} can be well-described and it will have the same structure of the attractor of the Chafee-Infante equation. 

For convenience, we will study the semilinear problem \eqref{eq:nl_semilinear}, since its attractor has the same structure as the attractor for \eqref{eq:nonlocal}.

Observe that the condition \textbf{(A1)} is satisfied for \eqref{eq:nl_semilinear} by the proof of Proposition \ref{Prop:inject.A}, in the previous section.

Consider the sequence $\{\lambda_n\}_{n \in \mathbb{N}}$ given by $\lambda_n =a(0)n^2$, $n \in \mathbb{N}$. Suppose that $a(0)n^2< \lambda < a(0)(n+1)^2$, $n \in \mathbb{N}$. We define the set
\begin{equation}\label{eq:MorseDecomp}
\mathcal{M}=\{M(j^\star): j=0,\dots, N;\ \star\in \{-,+\}\ \}\cup \{M(n)\},
\end{equation}
where $M(n)=\{0\}$ and $M(j^+)=\{\phi_{j+1}^+\}$ and $M(j^-)=\{\phi_{j+1}^-\}$, for $j=0,\dots, n-1$. Although the Morse decomposition \eqref{eq:MorseDecomp} depends on the parameter $\lambda$, we made the choice of simplifying the notation and do not present explicitly this dependency.

\begin{lemma}
	\label{NoConnections}There cannot be an heteroclinic connection between $%
	\phi _{j}^{+}$ and $\phi _{j}^{-}$ for $j=1,...,n$.
\end{lemma}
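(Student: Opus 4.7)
The plan is to exploit two structural ingredients already recorded in the paper: the Lyapunov functional $E$ from \eqref{energy} and the odd symmetry $\phi_j^- = -\phi_j^+$ given by Theorem \ref{theo:exist.equilibria}(ii). The argument is essentially a one-line energy comparison once both pieces are in place.

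First I would verify that $E$ is a \emph{strict} Lyapunov function along non-equilibrium trajectories of \eqref{eq:nonlocal}. Differentiating \eqref{energy} along a solution $u(t)$ and integrating by parts,
\[
\frac{d}{dt} E(u(t)) = -\int_0^\pi \bigl[a(\|u_x\|^2)u_{xx} + \lambda f(u)\bigr]\, u_t\, dx = -\|u_t\|^2,
\]
so $t \mapsto E(u(t))$ is non-increasing and is constant on a time interval if and only if $u_t \equiv 0$ there, i.e. $u$ is an equilibrium on that interval.

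Next I would compute $E(\phi_j^+) = E(\phi_j^-)$. Since $\phi_j^- = -\phi_j^+$, the term $\tfrac12 \int_0^{\|\phi_x\|^2} a(s)\,ds$ is unchanged, and the potential term $\int_0^\pi \int_0^{\phi(x)} f(s)\,ds\,dx$ is invariant under $\phi \mapsto -\phi$ because $f$ is odd (so $s \mapsto \int_0^s f$ is even).

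Finally, suppose for contradiction that there is a heteroclinic $\xi:\mathbb{R} \to H_0^1(0,\pi)$ with, say, $\xi(t)\to \phi_j^+$ as $t\to -\infty$ and $\xi(t) \to \phi_j^-$ as $t \to +\infty$. Since $\phi_j^+ \neq \phi_j^-$, $\xi$ is not an equilibrium, hence the strict decrease from the first step yields $E(\phi_j^+) > E(\phi_j^-)$, contradicting the second step. The reverse connection is ruled out by the same reasoning. No step is genuinely delicate here; the only thing to keep honest is that strict decrease requires $\xi$ not to be identically an equilibrium, which follows from the fact that the two endpoints are distinct. Note also that by the time-reparametrization relating \eqref{eq:nonlocal} and \eqref{eq:nl_semilinear}, the absence of such a connection for one problem implies the same for the other, so the conclusion applies in either framework.
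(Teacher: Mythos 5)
Your proof is correct under the paper's standing hypotheses, and it is cleaner and more uniform than the one in the paper: since $f$ is assumed odd and $\phi_j^-=-\phi_j^+$, the potential $s\mapsto\int_0^s f$ is even, so $E(\phi_j^+)=E(\phi_j^-)$ for \emph{every} $j$, and the strict-Lyapunov argument (which you correctly justify via $\tfrac{d}{dt}E(u(t))=-\|u_t\|^2$ and the fact that constancy of $E$ forces $u$ to be an equilibrium) kills both directions of the connection at once. The paper takes a different, two-case route: for $j$ even it also uses an energy identity, but derived from the reflection symmetry $\phi_j^-(x)=\phi_j^+(\pi-x)$ rather than from the sign symmetry, and for $j$ odd it abandons the energy argument entirely and instead invokes the lap-number/zero-dropping property of \cite{art-henry-MS} (an injective map from connected components of the positivity set at a later time into those at an earlier time, which is incompatible with the component count changing from $(j-1)/2$ to $(j+1)/2$). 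The reason for this extra work is visible in the Remark following the lemma: the authors want a proof that does not use the oddness of $f$ at all, so that the conclusion extends to the more general nonlinearities of \cite{CCM-RV21C-Inf} and completes Lemma 7 there. So what your argument buys is brevity and uniformity in the odd-$f$ setting actually treated in this paper; what the paper's argument buys is independence from the symmetry hypothesis on $f$. Both are valid for the statement as posed.
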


\begin{proof}
	When $j$ is even, the property $\phi _{j}^{-}\left( x\right) =\phi
	_{j}^{+}\left( \pi -x\right) $, for $x \in [0,\pi]$, implies that $E(\phi _{j}^{-})
	=E(\phi _{j}^{+}) $, where $E$ is the Lyapunov function (\ref{energy}) (see \cite[Lemma 7]{CCM-RV21C-Inf} for the
	details), so by the properties of Lyapunov functions the heteroclinic
	connection is not possible.
	
	When $j$ is odd, we make use of the lap-number property given in \cite[%
	Theorem 6]{art-henry-MS}. For a global solution $u\left( \text{%
		\textperiodcentered }\right) $ let%
	\begin{eqnarray*}
		Q^{+}\left( t\right)  &=&\{x\in \left( 0,1\right) :u\left( t,x\right) >0\},
		\\
		Q^{-}\left( t\right)  &=&\{x\in \left( 0,1\right) :u\left( t,x\right) <0\}.
	\end{eqnarray*}%
	In the proof of Theorem 6 in \cite{art-henry-MS} it is shown that if $t_{1}>t_{0}$,
	then there is an injective map for the connected components of $Q^{+}\left(
	t_{1}\right) $ ($Q^{-}\left( t_{1}\right) $) to connected components of $%
	Q^{+}\left( t_{0}\right) $ ($Q^{-}\left( t_{0}\right) $). Let, for example, $%
	u\left( \text{\textperiodcentered }\right) $ be a global solution such that $%
	u\left( t\right) \underset{t\rightarrow -\infty }{\rightarrow }\phi
	_{j}^{-},\ u\left( t\right) \underset{t\rightarrow +\infty }{\rightarrow }%
	\phi _{j}^{+}$. Since we have convergence in $C^{1}([0,\pi ])$, there are $%
	t_{0}<t_{1}$ such that the number of components of $Q^{+}\left( t_{0}\right) 
	$ is equal to $\left( j-1\right) /2$ and the number of components of $%
	Q^{+}\left( t_{1}\right) $ is equal to $\left( j+1\right) /2$. This
	contradicts the existence of the above injective map. Thus, such
	heteroclinic connection is impossible. A similar argument (but using $%
	Q^{-}\left( t\right) $) is valid for the connection from $\phi _{j}^{+}$ to $%
	\phi _{j}^{-}.$
\end{proof}

\begin{remark}
	The proof of Lemma \ref{NoConnections} is also valid if we use the
	conditions on $f$ and $a$ given in \cite{CCM-RV21C-Inf}, because Theorem 6 from 
	\cite{art-henry-MS} is valid for non-classical solutions as well (see \cite[%
	Theorem A3]{CCM-RV21C-Inf}). In particular, the function $f$ is not necessarily odd. This completes the proof of Lemma 7 from \cite%
	{CCM-RV21C-Inf}, which was given only for $j$ even.
\end{remark}

\begin{lemma}\label{lemma:AdmOrder} For any $a(0)N^2< \lambda < a(0)(N+1)^2$, $\mathcal{M}$ is a Morse decomposition. Moreover, $$\begin{aligned}
			&j^\pm< k^\pm \mbox{ for } j,k \in \{0,\dots,N-1\}\iff  j<k \mbox{ in }\mathbb{N},\\
			&j^\pm < N, \mbox{ for all } j \in \{0,\dots,N-1\}.
		\end{aligned}
		$$ 
	is an admissible order.
\end{lemma}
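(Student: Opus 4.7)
The plan is to verify the two defining properties of a Morse decomposition and then check that the stated partial order is admissible. Each element of $\mathcal{M}$ is a singleton equilibrium and, since $\mathcal{E}_\lambda$ is finite by Theorem~\ref{theo:exist.equilibria}, each is isolated in $H_0^1(0,\pi)$ and therefore an isolated invariant set. The decomposition property itself follows from the gradient structure recalled in Section~2: because $E$ in \eqref{energy} is a strict Lyapunov function and $\mathcal{E}_\lambda$ is finite, every global bounded solution $\xi:\mathbb{R}\to H_0^1(0,\pi)$ in $\mathcal{A}_\lambda$ satisfies $\xi(t)\to z_\pm\in\mathcal{E}_\lambda$ as $t\to\pm\infty$ with $E(z_-)>E(z_+)$ when $\xi$ is non-constant, so such $\xi$ either lies in a single element of $\mathcal{M}$ or is a heteroclinic orbit between two of them.

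Admissibility reduces to showing that, for every non-constant heteroclinic with $\alpha$-limit $\phi_-$ and $\omega$-limit $\phi_+$, the label of $\phi_+$ is strictly below that of $\phi_-$ in the proposed partial order. Three situations could violate this and must be ruled out: (i) a heteroclinic $\phi_{j+1}^+\leftrightarrow\phi_{j+1}^-$, which is forbidden by Lemma~\ref{NoConnections}; (ii) a heteroclinic $\phi_{j+1}^\pm\to\phi_{k+1}^\pm$ with $k>j$ in either sign combination, forbidden by the lap-number theorem of \cite[Theorem~6]{art-henry-MS}, valid in our nonlocal setting by the remark following Lemma~\ref{NoConnections}, since the number of interior zeros of $\xi(t,\cdot)$ cannot strictly increase in $t$, while $\phi_{j+1}^\pm$ has $j$ interior zeros and $\phi_{k+1}^\pm$ has $k>j$; and (iii) a heteroclinic $\phi\to 0$ from any non-trivial equilibrium $\phi$, which I would rule out via the Lyapunov inequality $E(\phi)<E(0)=0$.

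For~(iii) I would test the equilibrium equation $a(\|\phi_x\|^2)\phi_{xx}+\lambda f(\phi)=0$ against $\phi$ and integrate by parts to obtain
\[
a(\|\phi_x\|^2)\,\|\phi_x\|^2 \;=\; \lambda\int_0^\pi \phi(x)\,f(\phi(x))\,dx.
\]
Writing $F(u)=\int_0^u f(s)\,ds$ and $G(s)=F(s)-\tfrac12\,sf(s)$, one has $G(0)=G'(0)=0$ and $G''(s)=-\tfrac12\,s f''(s)>0$ for $s\neq 0$ by \eqref{eq:prop_f}, hence $F(s)>\tfrac12\,sf(s)$ for every $s\neq 0$. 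Combined with the monotonicity bound $\int_0^{\|\phi_x\|^2}a(s)\,ds\leq a(\|\phi_x\|^2)\,\|\phi_x\|^2$, this yields
\[
E(\phi)\;\leq\;\tfrac12\,a(\|\phi_x\|^2)\,\|\phi_x\|^2-\lambda\int_0^\pi F(\phi)\,dx\;<\;0\;=\;E(0),
\]
contradicting the strict decrease of $E$ along a non-constant heteroclinic ending at $0$.

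I expect case~(iii) to be the main obstacle: in the classical Chafee--Infante problem the inequality $E(\phi)<0$ is immediate because the kinetic part of $E$ is just $\tfrac12\|\phi_x\|^2$, whereas here the primitive $\int_0^{\|\phi_x\|^2}a(s)\,ds$ must be bounded above by $a(\|\phi_x\|^2)\|\phi_x\|^2$, which is precisely where the monotonicity of $a$ enters. Cases~(i) and~(ii) then reduce cleanly to results already available in the text.
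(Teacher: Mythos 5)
Your proof is correct and, for the two main cases, follows the same route as the paper: connections between $\phi_{j}^{+}$ and $\phi_{j}^{-}$ are excluded by Lemma \ref{NoConnections}, and connections that would increase the number of interior zeros are excluded by the lap-number theorem of \cite[Theorem 6]{art-henry-MS}, exactly as in the text. Where you genuinely diverge is in case (iii), the impossibility of a heteroclinic from a nontrivial equilibrium into $0$. The paper subsumes this into the single assertion ``by the lap-number property, $k\geq j$,'' which for the zero equilibrium implicitly requires tracking the zero number of $\xi(t,\cdot)$ as it enters $W^{s}(0)$ along modes $\sin(kx)$ with $k\geq N+1$; you instead prove directly that $E(\phi)<0=E(0)$ for every nontrivial equilibrium, using the identity $a(\|\phi_x\|^2)\|\phi_x\|^2=\lambda\int_0^\pi \phi f(\phi)\,dx$, the bound $\int_0^{\|\phi_x\|^2}a(s)\,ds\leq a(\|\phi_x\|^2)\|\phi_x\|^2$ coming from the monotonicity of $a$, and the strict convexity of $G(s)=F(s)-\tfrac12 sf(s)$ coming from $sf''(s)<0$. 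This energy argument is sound (I checked the chain of inequalities) and is arguably cleaner at this point than the lap-number route, since it sidesteps the degeneracy of the zero count at the trivial equilibrium; its cost is that it leans on the monotonicity of $a$, which the lap-number argument does not need. You also make explicit, via the gradient structure, why $\mathcal{M}$ is a Morse decomposition at all (every bounded full orbit is an equilibrium or a heteroclinic with strictly decreasing energy), a step the paper leaves implicit from Section 2. No gaps.
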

\begin{proof}
By Lemma \ref{NoConnections}, there cannot exist connections between equilibria with the same number of zeros.
	
	Now, suppose that we have $M(k^{\star})$ and $M(j^{\blacktriangle})$, for $k,j\in \{0,\dots, (N-1), N\}$, $\star, \blacktriangle \in \{\{\emptyset\},+,-\}$ and a global solution $\xi:\mathbb{R}\to X$ satisfying
	$$
	M(k^{\star})\stackrel{t\rightarrow -\infty}{ \longleftarrow } \xi(t) \stackrel{t\rightarrow +\infty}{ \longrightarrow }M(j^{\blacktriangle}).
	$$

The solution $\xi$ is called a connection from $M(k^\star)$ to $M(j^\blacktriangle)$. We denote by $C(M(k^\star),M(j^\blacktriangle))$ the set of all connections from $M(k^\star)$ to $M(j^\blacktriangle)$.
	
By the lap-number property, \cite[Theorem 6]{art-henry-MS}, we have $k\geq j$. The previous arguments exclude the case $k=j$. It follows that $k> j$, as desired.
	
	Therefore, $\mathcal{M}$ is a Morse decomposition and the described order is admissible.     
\end{proof}

Thus \textbf{(A2)} is satisfied. Now, \textbf{(A3)} is assured by Theorem \ref{theo:Conley_index}.

Before we continue, we will present results and definitions from \cite{Franzosa88Cont}, in our context. For a more details and a more general approach, we recommend the cited reference.

Observe that, for each $\lambda \in (a(0)n^2, a(0)(n+1)^2)$, $n\in \mathbb{N}$, and $\tau \in [0,1]$, problem \eqref{eq:nonlocal.tau} admits a global attractor, which we will denote by $\mathcal{A}^\tau$. Again, we omit the dependence of $\lambda$ in the notation. Moreover, applying Proposition \ref{Prop:inject.A} to $a_\tau(\cdot)$ instead of $a(\cdot)$, we also obtain that the semigroup $\{S_\tau(t):t \geq 0\}$ defines a flow
$$
\varphi_\tau :\mathbb{R}\times \mathcal{A}^\tau \to \mathcal{A}^\tau .
$$

\begin{lemma}\label{lemma:UpperSemicA}
	The family $\{\mathcal{A}^\tau: \tau \in [0,1]\}$ is upper semicontinuous.
\end{lemma}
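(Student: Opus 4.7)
The plan is to apply the standard abstract scheme for upper semicontinuity of global attractors under a continuous parametric perturbation: one needs a common absorbing (or even just uniformly bounding) set and continuity of the family $\{S_\tau(t):t\geq 0\}$ uniformly on bounded sets over a fixed finite time. Both ingredients are within reach using the material already developed in the paper.

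First I would show that $\bigcup_{\tau\in[0,1]}\mathcal{A}^\tau$ is bounded in $H^1_0(0,\pi)$. For each $\tau\in[0,1]$ the functional
\begin{equation*}
E_\tau(u)=\tfrac{1}{2}\int_0^{\|u_x\|^2}a_\tau(s)\,ds-\lambda\int_0^\pi\!\int_0^{u(x)}f(s)\,ds\,dx
\end{equation*}
is a Lyapunov function for $\{S_\tau(t):t\geq 0\}$ that decreases strictly off the equilibria, so every $u\in\mathcal{A}^\tau$ satisfies $E_\tau(u)\leq\max_{\phi\in\mathcal{E}^\tau}E_\tau(\phi)$. By Lemma \ref{lemma:cont.equilibria} the equilibria depend continuously on $\tau\in[0,1]$ and the map $\tau\mapsto\max_{\phi\in\mathcal{E}^\tau}E_\tau(\phi)$ is therefore bounded on $[0,1]$. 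Combining this upper bound with the coercive lower bound $E_\tau(u)\geq\tfrac{m}{2}\|u_x\|^2-\lambda\int_0^\pi F(u)\,dx$ (where $F(s)=\int_0^sf$) and the dissipativity \eqref{eq:dissip_f} and Poincaré's inequality, one gets a radius $R>0$ such that $\mathcal{A}^\tau\subset B:=\{u\in H^1_0(0,\pi):\|u_x\|\leq R\}$ for every $\tau\in[0,1]$.

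Next I would upgrade the pointwise continuity in Lemma \ref{lemma:Cont_family} to continuity uniform in the initial datum on $B$, for any fixed time $T$. Reading that proof carefully, the only place orbit-dependent constants appear is the $L^\infty$-type bound $C_f$; but for initial data in $B$ the comparison argument with $g(u)=f(u)/a(0)$ (already used there) gives such a bound depending only on $R$, and the subsequent singular Gronwall inequality therefore produces a constant $\bar C=\bar C(R,T)$ with
\begin{equation*}
\sup_{u_0\in B}\|S_\tau(T)u_0-S_{\tau_0}(T)u_0\|_{H^1_0(0,\pi)}\leq 2\bar Ce^{(K-1)T}e^T\Gamma(\tfrac{1}{2})\,|\tau-\tau_0|\xrightarrow[\tau\to\tau_0]{}0.
\end{equation*}

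With these ingredients in hand, I would close the proof by the classical Hale--Raugel argument. Fix $\tau_0\in[0,1]$ and $\varepsilon>0$. Since $\mathcal{A}^{\tau_0}$ attracts bounded sets, there is $T>0$ such that $S_{\tau_0}(T)B$ lies in the open $\varepsilon/2$-neighbourhood of $\mathcal{A}^{\tau_0}$. By the uniform continuity obtained above, there is $\delta>0$ such that for $|\tau-\tau_0|<\delta$ one has $\sup_{z\in B}\|S_\tau(T)z-S_{\tau_0}(T)z\|_{H^1_0(0,\pi)}<\varepsilon/2$. Since $\mathcal{A}^\tau=S_\tau(T)\mathcal{A}^\tau$ by invariance, any $x\in\mathcal{A}^\tau$ has the form $x=S_\tau(T)z$ with $z\in\mathcal{A}^\tau\subset B$, and hence
\begin{equation*}
\mathrm{dist}(x,\mathcal{A}^{\tau_0})\leq\|S_\tau(T)z-S_{\tau_0}(T)z\|_{H^1_0(0,\pi)}+\mathrm{dist}(S_{\tau_0}(T)z,\mathcal{A}^{\tau_0})<\varepsilon,
\end{equation*}
which is the required upper semicontinuity at $\tau_0$.

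The main obstacle I anticipate is making rigorous the uniform-on-$B$ continuity step: one has to verify that the constants $C_f$, $C$, $K$, $\bar C$ in the proof of Lemma \ref{lemma:Cont_family} can be chosen depending only on $R$ (and $T$), independently of the initial datum and of $\tau$ in a small neighbourhood of $\tau_0$. Once that bookkeeping is done, the rest is routine.
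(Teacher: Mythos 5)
Your proof is correct, but it follows a genuinely different route from the paper's. The paper establishes that $\bigcup_{\tau\in[0,1]}\mathcal{A}^\tau$ is bounded in a fractional power space $X^\gamma$ with $\gamma>\tfrac12$ (by the same smoothing estimate used to obtain \eqref{eq:bounded_frac_space}), so that the closure of the union of attractors is \emph{compact} in $H^1_0(0,\pi)$; it then combines this with the merely pointwise (collective) continuity of $\tau\mapsto S_\tau$ from Lemma \ref{lemma:Cont_family} and invokes the abstract upper-semicontinuity theorem (Theorem 3.6 of \cite{CLRLibro}). You instead settle for uniform \emph{boundedness} of the attractors in $H^1_0(0,\pi)$ — which you derive cleanly from the Lyapunov functional, the continuity of equilibria in Lemma \ref{lemma:cont.equilibria}, coercivity and \eqref{eq:dissip_f} — and compensate by strengthening Lemma \ref{lemma:Cont_family} to continuity \emph{uniform over the bounded set} $B$ at a fixed time $T$, after which the classical triangle-inequality argument closes the proof. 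The trade-off is real: the paper's route needs compactness of the union of attractors but only sequential continuity of the semigroups, while yours avoids any compactness of the union but requires the bookkeeping you flag at the end, namely that the constants $C_f$, $C$, $K$, $\bar C$ in the proof of Lemma \ref{lemma:Cont_family} depend only on $R$ and $T$; this is indeed the case here (the comparison argument bounding orbits of data in $B$ on $[0,T]$ gives an $L^\infty$ bound depending only on $R$ and $T$), so your argument is complete once that verification is written out. Your approach is arguably more self-contained, since it does not rely on the external Theorem 3.6; the paper's is shorter because the smoothing estimate was already available from the proof of Theorem \ref{theo:Conley_index}.
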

\begin{proof}
	It is not difficult to see that $\bigcup_{\tau \in [0,1]} \mathcal{A}^\tau$ is a bounded set in $X^\gamma$, for some $\gamma \in (\frac{1}{2},1]$. This follows by the same reasoning applied to obtain \eqref{eq:bounded_frac_space} together with the non-degeneracy of $a(\cdot)$ and the dissipativy condition \eqref{eq:dissip_f}.
	
	Hence,
	\begin{equation*}
		\overline{\bigcup_{\tau \in [0,1]} \mathcal{A}^\tau} \mbox{ is compact in } H^1_0(0,\pi).
	\end{equation*}
	
	Now, the family of semigroups $\{S_\tau(t): t \geq 0\}$, $\tau \in [0,1]$, is continuous in the sense of Lemma \ref{lemma:Cont_family}.
	
	Finally, we apply Theorem 3.6 in \cite{CLRLibro} and we obtain the upper semicontinuity of the attractors.
\end{proof}

We can define 
$$\mathcal{I}=\{I \subset X: I \mbox{ is an isolated invariant  set of }S_\tau(\cdot), \mbox{ for some }\tau \in [0,1]\}.
$$
 
Given a compact $N\subset \mathcal{A}_{[0,1]}:=\bigcup_{\tau \in [0,1]}\mathcal{A}^\tau$, we may associate the maps
$$
\Lambda(N)=\{\tau \in [0,1]: N \mbox{ is an isolating neighborhood in } \mathcal{A}^\tau\}
$$
and
$\sigma_N: \Lambda(N)\to \mathcal{I}$ given by $\sigma_N(\tau)=S_\tau$, where $S_\tau$ is the largest invariant set of $S_\tau(\cdot)$ in $N$.

Define the sets
$$
\mathcal{M}_P\!\!=\!\!\!\!\bigcup_{\tau \in [0,1]}\!\!\!\{\left(\mathcal{M}^\tau\!,\mathcal{A}^\tau\!\right)\!:\ \!\mathcal{M}^\tau\!=\!\{M^\tau\!(\pi):\pi \in P\} \mbox{ is a Morse decomposition of } \mathcal{A}^\tau\},
$$
$$\begin{aligned}
\mathcal{M}_<=\{(\mathcal{M},\mathcal{A})\in \mathcal{M}_P: &  \mbox{ if } \mathcal{M}=\{M(\pi):\pi \in P\} \mbox{ and, for }\pi, \pi' \in P, \mbox{ there is }\\ 
&\ \gamma \in C(M(\pi),M(\pi')),\mbox{ then } \pi'<\pi \},\end{aligned}
$$ 
where $<$ represents any partial order in $P$.

It is known, \cite[Proposition 4.4]{Franzosa88Cont}, that $\mathcal{I}$ is a topological space with the topology generated by the following basis
$$
\mathcal{B}=
\bigcup_{\stackrel{N \subset \mathcal{A}_{[0,1]}}{N \mbox{\scriptsize compact}}}\{\sigma_N(U) \subset X:\ U \mbox{ is an open set with } U\subset \Lambda(N) \}.
$$ 

Thus, $\mathcal{M}_P$ and $\mathcal{M}_<$ are topological spaces with the topology induced as subspaces of $\left(\prod_{\pi \in P}\mathcal{I}\right)\times \mathcal{I}$.

	\begin{definition} 
	The collection $\mathcal{M}=\{M(\pi)\}_{\pi \in P}$  is called a $<-$ordered Morse decomposition of $\mathcal{A}$ if $(\mathcal{M},\mathcal{A}) \in \mathcal{M}_P$ and if $\gamma \in \mathcal{A}\setminus \cup_{\pi\in P}M(\pi)$, then there is $\pi <\pi'$ with $\gamma \in C(M(\pi'),M(\pi))$.
\end{definition}
		
\begin{definition}
	Let $\mathcal{M}^\tau=\{M^\tau(\pi)\}_{\pi \in P}$ and $\mathcal{M}^{\tilde{\tau}}=\{M^{\tilde{\tau}}(\pi)\}_{\pi \in P}$ be Morse decompositions of $\mathcal{A}^\tau$ and $\mathcal{A}^{\tilde{\tau}}$, respectively.
	
	We say that $M^\tau$ and $M^{\tilde{\tau}}$ are related by continuation or are continuations of each other if there is a path $c$ in $\mathcal{M}_P$ from $\prod_{\pi \in P} M^\tau(\pi)\times \mathcal{A}^\tau$ to $\prod_{\pi \in P} M^{\tilde{\tau}}(\pi)\times \mathcal{A}^{\tilde{\tau}}$. If, furthermore, $M^\tau$ and $M^{\tilde{\tau}}$ are $<-$ordered and the path $c$ is in $\mathcal{M}_<$, then we say that the associated admissible orderings are related by continuation or are continuations of each other.
\end{definition}

\begin{theorem}[Corollary 5.6, \cite{Franzosa88Cont}] \label{theo:F88_corol}
	If the flow ordering of $\mathcal{M}$ is related by continuation to an admissible ordering of $\tilde{\mathcal{M}}$ then the set of connection matrices of $\tilde{\mathcal{M}}$ is a subset of the set of connection matrices of $\mathcal{M}$.
\end{theorem}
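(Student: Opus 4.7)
The plan is to combine two complementary facts from Franzosa's theory: \emph{continuation invariance} of the connection matrix set along paths in $\mathcal{M}_<$, and an \emph{order-extension} bookkeeping that compares the connection matrix sets of one Morse decomposition under two admissible orderings.

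I would first dispatch the purely algebraic part. By definition, a connection matrix $\Delta$ for $(\mathcal{M},<)$ must be upper triangular with respect to $<$, i.e.\ $\Delta_{\pi,\pi'}=0$ whenever $\pi \not< \pi'$; the remaining two requirements (degree $-1$ boundary, and inducing the correct homology index braid) depend only on the decomposition. If $<'$ is an admissible order that extends $<$, then the set of pairs with $\pi \not<' \pi'$ is strictly contained in the set with $\pi \not< \pi'$, so triangularity with respect to $<'$ is a strictly weaker constraint. Hence
\[
\mathrm{CM}(\mathcal{M}, <) \;\subseteq\; \mathrm{CM}(\mathcal{M}, <'),
\]
and in particular, since every admissible ordering extends the flow ordering $<_F$, the set $\mathrm{CM}(\mathcal{M},<_F)$ is the smallest among all such sets for $\mathcal{M}$.

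Next I would address continuation invariance, which is the analytic content: if $(\mathcal{M},<)$ and $(\tilde{\mathcal{M}},<)$ are joined by a path $c$ in $\mathcal{M}_<$, then the homology index braids along $c$ are canonically identified, and a map implementing the braid isomorphism at one endpoint does the same at the other, so $\mathrm{CM}(\mathcal{M},<)=\mathrm{CM}(\tilde{\mathcal{M}},<)$. The standard route is to produce a family of index filtrations depending continuously on the parameter $\tau$; the required ingredients are Lemma \ref{lemma:cont.equilibria} (persistence and continuity of the Morse sets), Lemma \ref{lemma:Cont_family} (continuity of the semigroup family), and Lemma \ref{lemma:UpperSemicA} (upper semicontinuity of the attractors), which together furnish the admissibility and compactness needed to propagate index data across the path.

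With these two tools in hand, the argument is short. Let $<$ denote the common partial order on $P$ that is the flow order $<_F^{\mathcal{M}}$ at the $\mathcal{M}$-end of the continuation and an admissible order for $\tilde{\mathcal{M}}$ at the other end; being admissible at $\tilde{\mathcal{M}}$, it extends the flow order $<_F^{\tilde{\mathcal{M}}}$. The order-extension inclusion at $\tilde{\mathcal{M}}$ gives $\mathrm{CM}(\tilde{\mathcal{M}}, <_F^{\tilde{\mathcal{M}}}) \subseteq \mathrm{CM}(\tilde{\mathcal{M}}, <)$, and continuation invariance along $c$ gives $\mathrm{CM}(\tilde{\mathcal{M}}, <) = \mathrm{CM}(\mathcal{M}, <_F^{\mathcal{M}})$. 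Chaining the two yields $\mathrm{CM}(\tilde{\mathcal{M}}, <_F^{\tilde{\mathcal{M}}}) \subseteq \mathrm{CM}(\mathcal{M}, <_F^{\mathcal{M}})$.

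The main obstacle is continuation invariance: producing a uniform family of index pairs as $\tau$ varies and verifying that the induced braid maps are canonical. In the phase space $H^1_0(0,\pi)$ there is no local compactness, so Rybakowski-style admissibility must be checked throughout the family; the higher-regularity estimate in the fractional power space $X^\gamma$, $\gamma>\tfrac12$, obtained in the proof of Theorem \ref{theo:Conley_index}, is exactly the ingredient that controls the orbits uniformly in $\tau$ and makes the continuous deformation of index pairs feasible. Once that analytic backbone is in place, the combinatorial order-extension step is essentially bookkeeping.
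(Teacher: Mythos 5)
This statement is not proved in the paper at all: it is quoted verbatim as Corollary 5.6 of \cite{Franzosa88Cont} and used as a black box, so there is no internal proof to compare against. Judged on its own terms, your reconstruction has the right skeleton and is in fact the structure of Franzosa's own argument: (i) for a fixed Morse decomposition, if $<'$ extends $<$ then $\mathrm{CM}(\mathcal{M},<)\subseteq\mathrm{CM}(\mathcal{M},<')$, so the flow-order set is minimal; (ii) along a path in $\mathcal{M}_<$ the connection-matrix sets at the two ends coincide; (iii) chain the two. Two caveats. First, in step (i) your justification is incomplete: you dismiss the braid condition as ``depending only on the decomposition,'' but it is indexed by the intervals of the chosen order, so it does depend on the order. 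The inclusion survives because every $<'$-interval is a $<$-interval when $<'$ extends $<$ (fewer intervals means fewer braid constraints), and you need to say this; upper triangularity alone does not give the claim.

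Second, step (ii) --- continuation invariance of the connection-matrix set --- is the entire content of the theorem, and you leave it as a gesture. More importantly, the ingredients you propose for it (Lemmas \ref{lemma:cont.equilibria}, \ref{lemma:Cont_family} and \ref{lemma:UpperSemicA}, and the $X^\gamma$ estimate from Theorem \ref{theo:Conley_index}) are not part of a proof of this abstract statement: they are the application-specific facts the paper uses \emph{afterwards} to verify that the hypothesis of the corollary (existence of a path $c$ in $\mathcal{M}_{<_F}$, Lemma \ref{lemma:path_c}) holds for the family \eqref{eq:nonlocal.tau}. Conflating the two means your proposal, read as a proof of the theorem as stated, still rests on an unproved general continuation theorem for index filtrations over the parameter space --- which is exactly why the paper imports it from \cite{Franzosa88Cont} rather than proving it.
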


Fix $\lambda\in \left( a(0)N^{2},a(0)\left( N+1\right) ^{2}\right)$ and let $\{S_\tau(t):t\geq 0\}$, $\tau \in [0,1]$, be the family of semigroups from Lemma \ref{lemma:cont.equilibria}. Denote $$\mathcal{E}^\tau=\{0\}\cup \{\phi_{j,\tau}^+,\phi_{j,\tau}^-:j=1,\dots, N\}.$$ 


Then $\mathcal{M}^\tau =\{M^\tau(0^+),M^\tau(0^-)\dots, M^\tau((N-1)^+),M^\tau((N-1)^-), M^\tau(N)\}$ is a Morse decomposition for $M^\tau(N)=\{0\}$, $M^\tau(j^+)=\{\phi_{j+1,\tau}^+\}$ and $M^\tau(j^-)=\{\phi_{j+1,\tau}^-\}$ for $j=1,\dots,N-1$.

We have the following result:
\begin{lemma}\label{lemma:path_c}
	The map 
	$$\begin{aligned}
		c: [0,1]&\to \mathcal{M}_P \\
		\tau &\mapsto c(\tau)=(\mathcal{M}^\tau, \mathcal{A}^\tau)
	\end{aligned}.
	$$
is a path in $\mathcal{M}_P$.
\end{lemma}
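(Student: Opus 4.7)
The plan has two parts: first verify that $c$ takes values in $\mathcal{M}_P$, then check that $c$ is continuous.

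For the first part, I would argue that for each fixed $\tau \in [0,1]$ the function $a_\tau$ shares all structural properties required of $a$ in \eqref{eq:nonlocal} (non-decreasing, $C^1$, globally Lipschitz, with $m \leq a_\tau \leq M$), so every general result about \eqref{eq:nonlocal} applies verbatim to \eqref{eq:nonlocal.tau}. In particular, the Lyapunov functional \eqref{energy} built with $a_\tau$ in place of $a$ is still a Lyapunov function for $\{S_\tau(t):t \geq 0\}$, the set $\mathcal{E}^\tau$ from Lemma \ref{lemma:cont.equilibria} is exactly the equilibrium set, and the proof of Lemma \ref{NoConnections} forbids $\phi_{j,\tau}^+$--$\phi_{j,\tau}^-$ connections (its use of oddness of $f$, symmetry, and the lap-number property is insensitive to replacing $a$ with $a_\tau$). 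The argument of Lemma \ref{lemma:AdmOrder} then carries over to show that $\mathcal{M}^\tau$ is a Morse decomposition of $\mathcal{A}^\tau$.

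For continuity of $c$ at an arbitrary $\tau_0 \in [0,1]$, I would proceed coordinate-wise in the product topology on $\left(\prod_{\pi \in P}\mathcal{I}\right)\times \mathcal{I}$; since $P$ is finite, only finitely many basic open sets need to be controlled. Given a basic open neighborhood $\sigma_{N_\pi}(U_\pi)$ of $M^{\tau_0}(\pi)$ in $\mathcal{I}$, with compact $N_\pi$ isolating $M^{\tau_0}(\pi)$ in $\mathcal{A}^{\tau_0}$ and open $U_\pi \ni \tau_0$ contained in $\Lambda(N_\pi)$, I would shrink $N_\pi$ using the equilibrium continuity of Lemma \ref{lemma:cont.equilibria} so that for $\tau$ in a sufficiently small neighborhood $W_\pi$ of $\tau_0$ the only equilibrium of $\{S_\tau(t)\}$ inside $N_\pi$ is $M^\tau(\pi)$. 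The Lyapunov structure of $\{S_\tau(t)\}$ then rules out any other bounded invariant set in a small enough $N_\pi$, yielding $\sigma_{N_\pi}(\tau) = M^\tau(\pi)$ on $W_\pi \cap U_\pi$; continuity of the family of semigroups (Lemma \ref{lemma:Cont_family}) guarantees that orbits of the perturbed systems do not abruptly escape through $\partial N_\pi$.

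For the attractor coordinate, I would invoke Lemma \ref{lemma:UpperSemicA}: the family $\{\mathcal{A}^\tau\}$ is uniformly bounded in $H^1_0(0,\pi)$, so any compact neighborhood $N$ used to form a basic open set around $\mathcal{A}^{\tau_0}$ in $\mathcal{I}$ remains an isolating neighborhood for $\tau$ close to $\tau_0$. Since the global attractor is always the maximal compact invariant set of its flow, this gives $\sigma_N(\tau) = \mathcal{A}^\tau$ on a suitable open neighborhood $W_A \ni \tau_0$. Intersecting the finitely many $\tau$-neighborhoods produced above yields the open set around $\tau_0$ mapped by $c$ into the prescribed basic open set of $c(\tau_0)$, proving continuity. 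The main obstacle is the simultaneous choice of isolating neighborhoods for all Morse sets and for the attractor — it is precisely here that Lemma \ref{lemma:cont.equilibria}, Lemma \ref{lemma:UpperSemicA}, and the Lyapunov function combine to deliver the required uniform control.
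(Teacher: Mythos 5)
Your proposal is correct and follows essentially the same route as the paper: continuity is checked coordinate-wise against the basic open sets $\sigma_N(U)$ of Franzosa's topology, using the continuity of equilibria (Lemma \ref{lemma:cont.equilibria}), the continuity of the family of semigroups (Lemma \ref{lemma:Cont_family}), and the upper semicontinuity of the attractors (Lemma \ref{lemma:UpperSemicA}). Your added justification via the Lyapunov function that the maximal invariant set in a small isolating neighborhood is exactly the corresponding Morse set is a welcome elaboration of what the paper compresses into ``by the continuity of the Morse decomposition.''
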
 
\begin{proof}
Consider $\tau_0 \in [0,1]$ and let $\mathcal{V}=\left(\prod_{k \in \{0^\pm,\dots,(N-1)^\pm, N\}} V_k\right)\times V_{\tau_0}$ be an open set in $\mathcal{M}_P$ that contains $c(\tau_0)$. Hence, we have
$$
M^{\tau_0}(k) \subset V_k, \mbox{ for all } k=0^\pm,\dots,(N-1)^\pm, N, \mbox{ and } \mathcal{A}^{\tau_0} \subset V_{\tau_0}.
$$
Since $\mathcal{B}$ is a basis for $\mathcal{M}_P$, we may assume, w.l.g, that we can find compact sets $N_k, N_{\tau_0} \subset X$ and open sets $U_k\subset \Lambda(N_k)$, $U_{\tau_0}\subset \Lambda(N_{\tau_0})$ such that 
$$
V_k=\sigma_{N_k}(U_k) \mbox{ for all } k=0^\pm,\dots,(N-1)^\pm, N, \mbox{ and } V_{\tau_0}=\sigma_{N_{\tau_0}}(U_{\tau_0}) .
$$

Take $U=U_{\tau_0}\bigcap_{k=0\pm,\dots, (N-1)^\pm, N} U_k$. Then, for any $\tau \in U$, we have that
$$
M^\tau(k) \mbox{ is the largest invariant set in }N_k, \ k=0^\pm,\dots,(N-1)^\pm, N
$$
$$
\mathcal{A}^\tau \mbox{ is the largest invariant set in }N_{\tau_0}.
$$

Hence $c(\tau)=(\mathcal{M}^\tau, \mathcal{A}^\tau)\in \mathcal{V}$, for all $\tau \in U$.

It is clear that $U$ is a not empty and open set in $[0,1]$, by the continuity of the Morse decomposition and the upper semicontinuity of the family of attractors.

Therefore, $c$ is a path in $\mathcal{M}_P$.
\end{proof} 
 
\begin{theorem}
For any $\lambda \in\left( a(0)N^{2},a(0)\left( N+1\right) ^{2}\right)$, the connection matrix associated with the Morse decomposition \eqref{eq:MorseDecomp} is as in property \textbf{(A4)}. 
\end{theorem}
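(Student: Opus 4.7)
The strategy is to apply Franzosa's continuation theorem (Theorem \ref{theo:F88_corol}) along the one-parameter family of problems \eqref{eq:nonlocal.tau}, $\tau\in[0,1]$, already considered in Lemma \ref{lemma:cont.equilibria}. At $\tau=1$ this recovers the nonlocal problem, while at $\tau=0$ the diffusion coefficient $a_{0}(s)\equiv a(\|\phi_x\|^2)$ is constant and the equation reduces, after a trivial time-rescaling, to the classical Chafee-Infante problem \eqref{eq:C-I} with parameter $\tilde\lambda=\lambda/a(\|\phi_x\|^2)\in(N^{2},(N+1)^{2})$. For the Chafee-Infante problem the connection matrix is exactly the matrix displayed in property \textbf{(A4)}, as used in \cite{Mischaikow95}. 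The goal is to transport this information from $\tau=0$ to $\tau=1$.

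The first step is to promote the Morse decomposition structure to every $\tau\in[0,1]$. Lemma \ref{lemma:cont.equilibria} gives the continuation of equilibria, and substituting $a$ by $a_{\tau}$ in \eqref{energy} yields a Lyapunov function for \eqref{eq:nonlocal.tau}, so $\mathcal{M}^{\tau}=\{M^{\tau}(j^{\pm}):0\le j\le N-1\}\cup\{M^{\tau}(N)\}$ is a Morse decomposition of $\mathcal{A}^{\tau}$. I would then check that Lemma \ref{NoConnections} extends verbatim to \eqref{eq:nonlocal.tau} for each $\tau$: for $j$ even the symmetry $\phi_{j,\tau}^{-}(x)=\phi_{j,\tau}^{+}(\pi-x)$ together with the $\tau$-Lyapunov function still blocks the connection, and for $j$ odd the lap-number argument of \cite{art-henry-MS} applies unchanged because \eqref{eq:nonlocal.tau} remains a scalar one-dimensional parabolic equation with Dirichlet data. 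Combined with the lap-number ordering proved in Lemma \ref{lemma:AdmOrder}, this shows that the partial order $<$ appearing in \textbf{(A2)} is admissible for every $\mathcal{M}^{\tau}$.

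Next I would use Lemma \ref{lemma:path_c}: it already provides the path $c(\tau)=(\mathcal{M}^{\tau},\mathcal{A}^{\tau})$ in $\mathcal{M}_{P}$, and the previous step upgrades it to a path in $\mathcal{M}_{<}$. At the endpoint $\tau=0$ the admissible order $<$ coincides with the flow order of $\mathcal{M}^{0}$, by the Brunovsk\'y--Fiedler--Henry characterization of heteroclinic orbits in the Chafee-Infante attractor \cite{BF88,art-henry-MS}: a connection between any two equilibria whose lap numbers differ strictly does exist. Hence $c$ is a continuation from the flow ordering of $\mathcal{M}^{0}$ to an admissible ordering of $\mathcal{M}^{1}$, which is the exact hypothesis of Theorem \ref{theo:F88_corol}.

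The theorem then yields that every connection matrix of $\mathcal{M}^{1}$ is a connection matrix of $\mathcal{M}^{0}$; since the connection matrix for the Chafee-Infante problem has the unique form shown in \textbf{(A4)} (it is forced by the Morse indices together with the degree $-1$ and $\Delta^{2}=0$ conditions), the same matrix represents $\mathcal{M}^{1}$. The main obstacle I expect is the uniform-in-$\tau$ verification that Lemma \ref{NoConnections} and the admissible ordering of Lemma \ref{lemma:AdmOrder} survive the homotopy, together with the identification of $<$ with the flow order at $\tau=0$; once these points are nailed down, the topological machinery packaged in Theorem \ref{theo:F88_corol} closes out the argument at once.
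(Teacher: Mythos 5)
Your proposal is correct and follows essentially the same route as the paper: continuation along the family \eqref{eq:nonlocal.tau} via Lemmas \ref{lemma:cont.equilibria}, \ref{lemma:AdmOrder} and \ref{lemma:path_c}, identification of the admissible order with the flow order of the Chafee-Infante flow at $\tau=0$, openness of $\mathcal{M}_{<_F}$ to upgrade $c$ to a path in $\mathcal{M}_{<_F}$, and Theorem \ref{theo:F88_corol} to transport the unique Chafee-Infante connection matrix to $\tau=1$. The points you flag as needing verification (that Lemma \ref{NoConnections} and the admissible ordering persist for every $\tau$) are exactly what the paper handles by applying Lemma \ref{lemma:AdmOrder} to \eqref{eq:nonlocal.tau}, so no substantive gap remains.
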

\begin{proof}
Observe that, for $\tau=0$, we have the problem
\begin{equation}\label{eq:tau_equal0}
	\left\{	\begin{aligned}
		&u_t = \bar{a} u_{xx}+\lambda f(u),\ x \in (0,\pi), \ t >0,\\
		& u(t,0)=u(t,\pi)=0, \ t \geq 0,\\
		& u(0,\cdot)=u_0 \in H^1_0(0,\pi),
	\end{aligned}\right.
\end{equation}
for some constant $\bar{a}>0$ (which is given by $a(\|(\phi_N^\pm)_x\|^2)$, under the same notation of Lemma \ref{lemma:cont.equilibria}).

Since \eqref{eq:tau_equal0} is a Chafee-Infante equation, by \cite{art-henry-MS}, the flow ordering $<_F$ of $\varphi_0$
is given by  
\begin{equation}\label{eq:orderF}
\begin{aligned}
	&j^\pm<_F k^\pm \mbox{ for } j,k \in \{0,\dots,N-1\}\iff  j<k,\\
	&j^\pm <_F N, \mbox{ for all } j \in \{0,\dots,N-1\}.
\end{aligned}
\end{equation}

By \cite{Mischaikow95}, the only connection matrix when $\tau=0$ is given by
\begin{equation}\label{eq:ConMatrix}
\Delta =%
\begin{bmatrix}
	0 & D_{1} & 0 & \dots  & 0 \\ 
	& 0 & D_{2} & \ddots  & \vdots  \\ 
	\vdots  &  & \ddots  & \ddots  & 0 \\ 
	&  &  & 0 & D_{n} \\ 
	0 &  &  & \dots  & 0%
\end{bmatrix}%
\end{equation}
as in \textbf{(A4)}.

Remember that $a(0)N^2< \lambda < a(0)(N+1)^2$ implies $a_\tau(0)N^2< \lambda < a_\tau(0)(N+1)^2$, for all $\tau \in [0,1]$. 

Applying Lemma \ref{lemma:AdmOrder} to \eqref{eq:nonlocal.tau}, we have, for any $\tau \in [0,1]$, the partial order $<_F$, given in \eqref{eq:orderF},
is an admissible order for $\mathcal{M}^\tau$, $\tau \in [0,1]$.

Hence, $c([0,1])\subset \mathcal{M}_{<_F}$, for the function $c$ presented in Lemma \ref{lemma:path_c}. Since $\mathcal{M}_{<_F}$ is open in $\mathcal{M}_P$ (see \cite[Proposition 4.14]{Franzosa88Cont}), it follows that $c$ is also a path in $\mathcal{M}_{<_F}$.

By Theorem \ref{theo:F88_corol}, for each $\tau \in [0,1]$, the set of connection matrices related to $\mathcal{M}^\tau$ is unitary, whose element is given in \eqref{eq:ConMatrix}.

Now, we just need to observe that for $\tau =1$ problem \eqref{eq:nonlocal.tau} represents \eqref{eq:nonlocal}.

Therefore, the condition \textbf{(A4)} is satisfied for \eqref{eq:nonlocal}, as desired.
\end{proof}

In order to obtain the structure of the global attractor for the problem \eqref{eq:nonlocal}, we will present Theorem 1.2, \cite{Mischaikow95}. In order to do that, we need to define an auxiliary problem. Fix $n \in \mathbb{N}$. Denote by $\|\cdot\|_{\mathbb{R}^n}$ and $\left<\cdot,\cdot\right>$ the norm and inner product, respectively, of the Euclidean space $\mathbb{R}^n$. Consider the subsets 
$$
D^n = \{x=(x_1,\dots, x_n)\in \mathbb{R}^n: \|x\|_{\mathbb{R}^n}\leq 1\},
$$
$$
S^{n-1}= \{x=(x_1,\dots, x_n)\in \mathbb{R}^n: \|x\|_{\mathbb{R}^n}= 1\}
$$
and, for $j=1,\dots,n$, define $e_j^\pm=(\delta_{1j}^\pm,\dots, \delta_{nj}^\pm)$ with $\delta_{jj}=\pm 1$ and $\delta_{kj}^\pm=0$, if $k\neq j$.

 
Consider the following problem
\begin{equation}\label{eq:ODE}
\left\{\begin{aligned}
&\dot{\theta} = Q\theta- \left<Q\theta,\theta\right>\theta, \ \theta \in S^{n-1},\\
&\dot{r}=r(1-r), \ r \in [0,1],
\end{aligned}
\right.
\end{equation}
for 
$$
Q=\begin{bmatrix}
1 & 0 & \dots & 0 \\
0 & \tfrac{1}{2}& &\\
\vdots & & \ddots & \\
0 & & & \tfrac{1}{n}
\end{bmatrix} .
$$

It can be shown that problem \eqref{eq:ODE} defines a flow 
$$
\psi^n :\mathbb{R}\times D^n\to D^n.
$$

It has been proved in \cite[Theorem 1.1]{Mischaikow95} that the dynamics inside the attractor of \eqref{eq:C-I} is conjugated to the dynamics defined by $\psi^n$ on $D^n$, for $\lambda \in (n^2, (n+1)^2)$, $n \in \mathbb{N}$. Moreover, we have the following result

\begin{theorem}[Theorem 1.2, \cite{Mischaikow95}]\label{theo:MischaikowT1.2}
Assuming \emph{\textbf{(A1)-(A4)}} and $\lambda \in (\lambda_n,\lambda_{n+1})$, there exists a flow $\tilde{\varphi}_\lambda$ given by a time-reparameterization of $\varphi_\lambda$ and a continuous surjective map $g_\lambda:\mathcal{A}_\lambda\to D^n$, satisfying $M(j^\pm)=g_\lambda^{-1}(\{e_{j+1}^\pm\})$, for $0\leq j\leq n-1$, and $M(n)=g_\lambda^{-1}(\{0\})$, such that the following diagram commutes:
$$
\xymatrix{
\mathbb{R}\times \mathcal{A}_\lambda \ar[d]_{\tilde{\varphi}_\lambda} \ar[r]^{I_\mathbb{R}\times g_\lambda} & \mathbb{R}\times D^n \ar[d]^{\psi^n} \\
\mathcal{A}_\lambda \ar[r]^g & D^n 
}
$$
where $I_\mathbb{R}$ represents the identity in $\mathbb{R}$.
\end{theorem}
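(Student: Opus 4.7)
The plan is to construct the semi-conjugacy $g_\lambda$ inductively through the Morse filtration dictated by \textbf{(A2)} and the connection matrix from \textbf{(A4)}, following the scheme of Mischaikow's original argument. The model ODE \eqref{eq:ODE} has its own Morse decomposition of $D^n$ consisting of the equilibria $\{e_j^\pm\}_{j=1}^n \subset S^{n-1}$ together with the origin, and one can verify directly that the connection matrix of $\psi^n$ is exactly the one in \textbf{(A4)}. Since the combinatorial attractor-repeller structure of $(\mathcal{A}_\lambda,\varphi_\lambda)$ is, by Franzosa's uniqueness result, determined by the connection matrix, one already has an abstract identification between the two systems; the task is to upgrade this to a continuous surjection compatible with the flows.

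First I would choose the time-reparameterization $\tilde\varphi_\lambda$ so that the rates of approach to each Morse set $M(j^\pm)$ match those of $\psi^n$ near $e_j^\pm$; the Lyapunov structure provided by \eqref{energy} together with the hyperbolicity of the equilibria (recorded in Section 2) supplies the required rescaling on a neighborhood of each equilibrium, and these local reparameterizations can be glued into a global one along flow lines. Next I would define $g_\lambda$ level by level starting from the top: at level $n$, send $M(n) = \{0\}$ to $0 \in D^n$; using \textbf{(A3)} and the entry $D_n^\lambda = \bigl[\begin{smallmatrix}1\\-1\end{smallmatrix}\bigr]$, identify the unstable manifold $W^u(M(n))$ with the open $n$-cell $D^n \setminus S^{n-1}$, sending the two connecting orbits out of $0$ to the radial segments approaching $e_N^\pm$. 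Inductively at level $j$, the entry $D_j^\lambda = \bigl[\begin{smallmatrix}1 & 1\\-1 & -1\end{smallmatrix}\bigr]$ guarantees that each $\phi_{j+1}^\pm$ is connected to both $\phi_j^+$ and $\phi_j^-$, mirroring the local behavior of $\psi^n$ on the coordinate plane $\mathrm{span}(e_j,e_{j+1}) \cap S^{n-1}$; this combinatorial match allows one to extend $g_\lambda$ across the $j$-dimensional stratum as a continuous surjection onto the corresponding cell, consistently with what was already defined one level up.

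The main obstacle will be verifying that the locally defined pieces glue to a globally continuous map and that the preimages $g_\lambda^{-1}(p)$ are exactly what they should be — a single connecting orbit for $p \in D^n \setminus \{e_j^\pm,0\}$ and the prescribed Morse set at the special points. This is where the injectivity of the semigroup on $\mathcal{A}$ established in Proposition \ref{Prop:inject.A} becomes essential: it forces the candidate preimages to be genuine global orbits, so continuity of $g_\lambda$ reduces to continuous dependence of heteroclinic connections on their endpoints, which in turn follows from the Conley-index continuation already used in Theorem \ref{theo:Conley_index} together with the upper semicontinuity of Lemma \ref{lemma:UpperSemicA}. Surjectivity follows because the inductive construction fills every cell of $D^n$, and commutativity of the diagram is automatic from the flow-respecting way each cell is parameterized by connecting orbits.
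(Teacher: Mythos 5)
This statement is not proved in the paper at all: it is Theorem 1.2 of Mischaikow's work, quoted verbatim and used as a black box, so there is no internal proof to compare your argument against. Judged on its own terms, your reconstruction deviates from Mischaikow's actual strategy and contains genuine gaps. The most serious one is your claim that $g_\lambda^{-1}(p)$ should be a single connecting orbit for every $p \in D^n \setminus (\{e_j^\pm\} \cup \{0\})$. That would make $g_\lambda$ essentially a conjugacy off the equilibria, which is strictly stronger than what the theorem asserts (a continuous surjection with prescribed fibers only over the distinguished points) and would require exactly the transversality of stable and unstable manifolds that the paper explicitly states is \emph{not} available for the non-local problem. A semi-conjugacy can, and in general does, collapse whole continua of orbits to a single model orbit. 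Relatedly, ``continuous dependence of heteroclinic connections on their endpoints'' is not a consequence of Conley-index continuation: connection matrices guarantee the \emph{existence} of some connecting orbit between adjacent Morse sets, never uniqueness or continuous variation, so the step where you reduce continuity of $g_\lambda$ to this property has no support.

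Two further problems. First, your inductive cell-by-cell construction presupposes that the closure of each unstable manifold $W^u(M(j))$ is a closed cell attached to the lower strata in the standard way; this is not a consequence of \textbf{(A1)}--\textbf{(A4)}, and Mischaikow's proof avoids it by working algebraically, using the Morse filtration and the connection-matrix data to produce $g_\lambda$ through attractor--repeller pairs and cohomological (degree-type) arguments rather than by gluing explicit parameterizations of unstable manifolds. Second, you invoke hyperbolicity, the Lyapunov functional \eqref{energy}, Proposition \ref{Prop:inject.A} and Lemma \ref{lemma:UpperSemicA} inside the proof; but Theorem \ref{theo:MischaikowT1.2} is an abstract statement about any flow satisfying \textbf{(A1)}--\textbf{(A4)}, and none of those PDE-specific facts are among its hypotheses, so a correct proof cannot rely on them. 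The time-reparameterization in the theorem likewise does not come from matching exponential rates at hyperbolic equilibria (hyperbolicity is not assumed); it arises from normalizing the flow so that the Lyapunov-type filtration descends at unit speed. If you want to supply a proof here, the right move is to follow Franzosa--Mischaikow: build the nested attractor filtration from the admissible order of \textbf{(A2)}, use \textbf{(A3)} and \textbf{(A4)} to identify the index data of each filtration pair with that of the model flow $\psi^n$, and then construct $g_\lambda$ from this algebraic identification.
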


Thus, for what it was shown in the section, together with Theorem \ref{theo:MischaikowT1.2}, we conclude the following
\begin{theorem}
The global attractor $\mathcal{A}_\lambda$ of \eqref{eq:nonlocal}, for $a(0)N^2 <\lambda <a(0)(N+1)^2$, has the same structure of the global attractor $\tilde{\mathcal{A}}_{\frac{\lambda}{a\!(0)}}$ of \eqref{eq:C-I}.
\end{theorem}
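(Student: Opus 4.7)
The plan is to combine Theorem \ref{theo:MischaikowT1.2} with Theorem 1.1 of \cite{Mischaikow95} through the common model ODE flow. The preceding paragraphs of this section have been precisely designed to verify the four hypotheses (A1)--(A4) for \eqref{eq:nl_semilinear} (and hence for \eqref{eq:nonlocal}, since the two share the same attractor up to a smooth strictly monotone time-rescaling induced by $a(\|u_x\|^2)\ge m>0$): condition (A1) follows from Proposition \ref{Prop:inject.A} together with Lemma \ref{lemma:Cont_family}; condition (A2) is Lemma \ref{lemma:AdmOrder}; condition (A3) is the content of Theorem \ref{theo:Conley_index}; and (A4) is the connection-matrix theorem proved just above by continuation from the classical Chafee-Infante case. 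Therefore, for $\lambda \in (a(0)N^2, a(0)(N+1)^2)$, Theorem \ref{theo:MischaikowT1.2} yields a continuous surjection $g_\lambda : \mathcal{A}_\lambda \to D^N$ which, after a time-reparameterization $\tilde{\varphi}_\lambda$ of the semiflow on $\mathcal{A}_\lambda$, is a semiconjugacy onto the model flow $\psi^N$ defined by \eqref{eq:ODE}, sending $\phi_{j+1}^{\pm} \mapsto e_{j+1}^{\pm}$ for $0 \le j \le N-1$ and $0\mapsto 0$.

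Next, applying the same scheme (Theorem 1.1 of \cite{Mischaikow95}) to the Chafee-Infante equation \eqref{eq:C-I} with the rescaled parameter $\tilde{\lambda} = \lambda/a(0) \in (N^2,(N+1)^2)$ produces, up to time-reparameterization, a continuous surjection $\tilde{g}_{\tilde{\lambda}} : \tilde{\mathcal{A}}_{\tilde{\lambda}} \to D^N$ that is a semiconjugacy onto the same flow $\psi^N$ and that maps the Chafee-Infante equilibria onto the very same distinguished points $e_{j+1}^{\pm}$ and $0$ of $D^N$. Since both attractors are thus identified with the same model on $D^N$, equilibria correspond one-to-one under $\tilde{g}_{\tilde{\lambda}}^{-1}\circ g_\lambda$ on the level of Morse sets, and a heteroclinic connection between two equilibria in $\mathcal{A}_\lambda$ exists if and only if the corresponding connection exists in $\tilde{\mathcal{A}}_{\tilde{\lambda}}$, because both are dictated by the connecting orbits of $\psi^N$.

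I do not anticipate serious obstacles here, as the hard work has already been carried out in the verification of (A1)--(A4). The only point requiring a brief check is bookkeeping of the various time-reparameterizations: Theorem \ref{theo:MischaikowT1.2} is stated for $\tilde{\varphi}_\lambda$, a reparameterization of $\varphi_\lambda$, and in our setting \eqref{eq:nonlocal} differs from \eqref{eq:nl_semilinear} by the monotone change of time associated with $a(\|u_x\|^2)$. Because any such reparameterization leaves the set of equilibria, the Morse decomposition and its partial order, and the set of heteroclinic orbits invariant, the structural description carried by $\psi^N$ on $D^N$ transfers from \eqref{eq:nl_semilinear} to \eqref{eq:nonlocal}, and the two attractors $\mathcal{A}_\lambda$ and $\tilde{\mathcal{A}}_{\lambda/a(0)}$ are structurally equivalent, as claimed.
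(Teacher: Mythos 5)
Your proposal is correct and follows essentially the same route as the paper: verify \textbf{(A1)}--\textbf{(A4)} via the results established earlier in the section, apply Theorem \ref{theo:MischaikowT1.2} to semiconjugate $\mathcal{A}_\lambda$ onto the model flow $\psi^N$ on $D^N$, and then invoke Theorem 1.1 of \cite{Mischaikow95} to identify that model with the Chafee--Infante attractor for $\lambda/a(0)\in(N^2,(N+1)^2)$. Your additional remarks on the time-reparameterizations relating \eqref{eq:nonlocal} and \eqref{eq:nl_semilinear} are a harmless (and correct) elaboration of a point the paper leaves implicit.
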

\begin{proof}
Since problem \eqref{eq:nonlocal} satisfies \textbf{(A1)-(A4)}, by Theorem \ref{theo:MischaikowT1.2}, the attractor $\mathcal{A}_\lambda$ has the same structure of the attractor for \eqref{eq:ODE} for $n=N$. By \cite[Theorem 1.1]{Mischaikow95}, this is the same structure of the attractor of \eqref{eq:C-I}, for $\lambda \in (N^2,(N+1)^2)$ . 

\end{proof}
Thus, for any global bounded solution $\xi:\mathbb{R}\to H^1_0(0,\pi)$, we find $\phi_j,\phi_k\in \mathcal{E}_\lambda$ such that
\begin{equation*}
	\phi_j \stackrel{t\rightarrow -\infty}{\longleftarrow} \xi(t)  \stackrel{t\rightarrow +\infty}{\longrightarrow} \phi_k.
\end{equation*}

Also, either $\phi_j=\{0\}$ or $\phi_j$ has at least one more zero in $[0,\pi]$ than $\phi_k$, if $\xi(\cdot)$ is not an equilibrium.

If $\phi_j,\phi_k\in \mathcal{E}_\lambda$ and $\phi_j$ has at least one more zero in $[0,\pi]$ than $\phi_k$ or $\phi_j=0$, then we find a global bounded solution $\eta:\mathbb{R}\to H^1_0(0,\pi)$  satisfying
\begin{equation*}
	\phi_j \stackrel{t\rightarrow -\infty}{\longleftarrow} \eta(t)  \stackrel{t\rightarrow +\infty}{\longrightarrow} \phi_k.
\end{equation*}

In \cite{Fiedler_Rocha_Het_orb}, the authors pictured the connections in $\tilde{\mathcal{A}}_\lambda$ as this nice diagram, where the oriented paths determined by the arrows imply the existence of connections between the initial and the final equilibria of the path:

$$
\xymatrix{ & \phi_N^+ \ar[rr]\ar[rrdd]& &\phi_{N-1}^+\ar@{.}[rd] \ar@{.}[r] & \phantom{I_1}\dots\phantom{I_1} \ar@{.>}[r] & \phi_2^+ \ar[rr] \ar[rrdd] & & \phi_1^+
	\\
	\{0\} \ar[ru]\ar[rd] & & &  &\phantom{I_1} \dots \phantom{I_1}  \ar@{.>}[rd]\ar@{.>}[ru]&&&
	\\
	& \phi_N^- \ar[rruu]\ar[rr]& & \phi_{N-1}^- \ar@{.}[ru] \ar@{.}[r] &\phantom{I_1}\dots \phantom{I_1}\ar@{.>}[r]& \phi_2^- \ar[rr] \ar[rruu]& & \phi_1^-
}.
$$

\begin{remark}

In the construction above, we will construct a function $a$ which admits at least three positive equilibria, and not all stable. In order to make this more clear, we will introduce some results that can be found in \cite{CCM-RV21C-Inf}.

\begin{proposition}
	Assume that $a$ is non-decreasing. 
	Suppose that $\lambda>a(0)$ and, for each $d\in \mathbb{R}$, consider
	\begin{equation}\label{eq:C-Inf_var.d}
		\left\{\begin{aligned}
			& u_t=a(d)u_{xx}+\lambda f(u),\ t>0,\ x \in (0,\pi), \\
			& u(t,0)=u(t,\pi)=0, \ t\geq0, \\
			& u(0,\cdot)=u_0(\cdot) \in H^1_0(0,\pi), 
		\end{aligned}\right. 
	\end{equation}
	and $f \in C^2(\mathbb{R})$, satisfying \eqref{eq:prop_f} and $ \limsup_{|s|\rightarrow +\infty}\frac{f(s)}{s}<0$.
	
	Then, for each $d \in \mathbb{R}^+$, as long as $a(d)<\lambda$, we find a positive equilibrium $\phi_d$ of \eqref{eq:C-Inf_var.d}. Also, the function 
	$\mathbb{R}^+ \ni d\mapsto \|\phi_d'\|^2$ is non-decreasing.
\end{proposition}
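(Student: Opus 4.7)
The idea is to reduce the statement to known facts about the classical (local) Chafee-Infante stationary problem. Given $d \in \mathbb{R}^+$ with $a(d) < \lambda$, set $\mu_d := \lambda/a(d) > 1$ and divide the equilibrium equation $a(d)\phi'' + \lambda f(\phi) = 0$ by $a(d)$; this rewrites it as
\[
\phi'' + \mu_d f(\phi) = 0, \qquad \phi(0) = \phi(\pi) = 0,
\]
which is exactly the stationary Chafee-Infante problem with parameter $\mu_d$ sitting above the first bifurcation value $\mu = 1$. The whole plan is then to identify $\phi_d$ with the positive Chafee-Infante equilibrium $\varphi^{\mu_d}$ used earlier in the paper and transfer the desired properties through this identification.

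For existence, I would invoke the standard phase-plane / time-map construction. The conservation law $(\phi')^2/2 + \mu_d F(\phi) = \mu_d F(M)$, with $M := \max_{[0,\pi]}\phi$ and $F(s) := \int_0^s f$, turns the boundary condition $\phi(\pi)=0$ into the scalar equation $T(M;\mu_d)=\pi$, where
\[
T(M;\mu) = \sqrt{\tfrac{2}{\mu}}\int_0^M (F(M)-F(s))^{-1/2}\,ds.
\]
Under the standing hypotheses on $f$ (namely $f(0)=0$, $f'(0)=1$, $sf''(s)<0$ and $\limsup_{|s|\to\infty}f(s)/s<0$), the classical Chafee-Infante analysis shows that $T(\cdot;\mu)$ attains the value $\pi$ at a unique $M = M(\mu_d) > 0$, producing the positive equilibrium $\varphi^{\mu_d}$, which is the desired $\phi_d$.

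For the monotonicity of $d \mapsto \|\phi_d'\|^2 = \|\varphi^{\mu_d}_x\|^2$, the plan is simply to compose two monotonicities. On the one hand, $a$ being non-decreasing makes $d \mapsto \mu_d = \lambda/a(d)$ monotone on $\{d\in \mathbb{R}^+ : a(d) < \lambda\}$. On the other hand, the Chafee-Infante monotonicity of $\mu \mapsto \|\varphi^{\mu}_x\|^2$ on $(1,\infty)$, recalled earlier in the paper from \cite{CCM-RV21C-Inf}, supplies the second piece. Composing the two yields the asserted monotonicity at once. The only genuinely delicate ingredient is this Chafee-Infante monotonicity, which I would expect to be the main obstacle if one had to redo it from scratch: one implicitly differentiates $T(M(\mu);\mu)=\pi$ to fix the sign of $M'(\mu)$, and then exploits the energy identity $\|\varphi^\mu_x\|^2 = 2\mu \int_0^\pi (F(M(\mu)) - F(\varphi^\mu))\,dx$. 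Since it is quoted here as a black box, I would simply cite it.
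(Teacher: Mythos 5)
Your reduction is the natural one and is, implicitly, all the paper does: the proposition is stated inside a remark and attributed to \cite{CCM-RV21C-Inf} with no proof supplied, the only ingredient the paper itself records being that $\bar\lambda\mapsto\|\varphi^{\bar\lambda}_x\|$ is increasing for the classical stationary problem $\varphi_{xx}+\bar\lambda f(\varphi)=0$. Dividing the equilibrium equation by $a(d)$, setting $\mu_d=\lambda/a(d)>1$, and running the time-map argument for existence and uniqueness of the positive solution is the standard route, and that part of your plan is fine.

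The genuine problem is the direction of the monotonicity. Since $a$ is non-decreasing, the map $d\mapsto\mu_d=\lambda/a(d)$ is non-\emph{increasing} on $\{d\in\mathbb{R}^+:a(d)<\lambda\}$, and composing it with the \emph{increasing} map $\mu\mapsto\|\varphi^{\mu}_x\|^2$ yields that $d\mapsto\|\phi_d'\|^2$ is non-\emph{increasing} --- the opposite of what the proposition literally asserts. Your wording hides this: you say only that $d\mapsto\mu_d$ is ``monotone'' without committing to a direction, and then claim the composition ``yields the asserted monotonicity at once,'' which it does not. The surrounding remark in the paper in fact only works with the non-increasing direction (it needs the fixed point $d^*$ of $g(d)=\|\phi_d'\|^2$ to satisfy $d^*<g(0)$, which forces $g$ to decrease), so the printed ``non-decreasing'' is almost certainly a misprint; but your proof as written proves a statement different from the one asserted, and you should state explicitly that the composition gives a non-increasing function and flag the discrepancy rather than paper over it.
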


In what follows, we want to make a choice of function $c: \mathbb{R}\to\mathbb{R}$ in order to construct at least $3$ positive equilibria of
\begin{equation}\label{eq:nleq}
	\left\{\begin{aligned}
		&u_t=c(\Vert u_x\Vert^2)u_{xx}+\lambda f(u), \ t>0, x \in (0,\pi),\\ 
		& u(t,0)=u(t,\pi)=0, \ t\geq0 \\
		& u(0,x)=u_0(\cdot) \in H^1_0(0,\pi) 
	\end{aligned}\right. 
\end{equation}

Initially, consider a non-decreasing function $a:\mathbb{R}^+\to (0,+\infty)$. In this case, for every $d>0$, as long as $\lambda>a(d)$, we find $\phi_d$ a positive equilibrium of \eqref{eq:C-Inf_var.d}.

Observe that if $\phi_d$ is also an equilibrium of \eqref{eq:nonlocal} then $d=\|(\phi_d)_x\|^2$.	 

%
Consider initially that $a$ is \emph{increasing}. Then, if $\lambda > a(0)$ we know that there exists $d^*>0$ such that $\phi_{d^*}$ is an equilibrium for the non-local problem, that is, $\|(\phi_{d^*})_x\|^2=d^*$. 

Denote $d_0=g(0)$ and consider $\delta_0>0$ sufficiently small such that $d^*+4\delta_0 <d_0$. Observe that we can find $\bar{d} \in (0,d^*)$ with $\|\phi_{\bar{d}}\|^2=d^*+3\delta_0$.

For $\delta \in (0,\delta_0)$, define $c_\delta:\mathbb{R}^+ \to \mathbb{R}^+$ by the following: 
\begin{itemize}
	\item[i)] $c_\delta(t)=a(t)$ if $t \in [0, d^*+\delta]$;
	\item[ii)]  In $[d^*+2\delta, d^*+3\delta]$ we define as a segment connecting $c_\delta(d^*+2\delta)=a(d^*)$ and $c(d^*+3\delta)=a(\bar{d}).$ 
	\item[iii)] For $d \in [d^*+4\delta, +\infty)$, define as $c(d)=a(0)$.
	\item[iv)] In the intervals, we did not mention, we define as smooth function in a way that $c \in C^1(\mathbb{R}^+)$.
\end{itemize}


Observe that, for each $\delta \in (0,\delta_0)$, we find $d_\delta \in (d^\star+2\delta,d^\star+3\delta)$ such that $a(\|\phi_{d_\delta}'\|^2)=d_\delta$.

For simplicity, denote $\psi_\delta =\phi_{d_\delta}$. We need to show that we find $\delta>0$ such that we have a positive equilibria such that the linearization
$$
L v=v'' +\lambda\frac{f'(\psi_\delta)}{c(\|\psi'_\delta\|^2)}v-\tfrac{2\lambda^2 c'(\|\psi_\delta'\|^2)}{c(\|\psi'_\delta\|^2)^3}f(\psi_\delta)\int_{0}^{\pi} f(\psi_\delta(s))v(s)ds
$$
has a positive eigenvalue.

The argument to finish that is to observe that, although $\psi_\delta$ depends on our choice of $\delta$, the values of $\psi^\delta$ are bounded.

In fact, we just need to observe that the construction forces $\psi_\delta = \phi_d $, for some $d \in (\bar{d}, d^*)$.

There is a constant $J>0$ such that, for any $\psi =\phi_d$, $d \in (\bar{d}, d^*)$ the linearization 
$$
L v=v'' +\lambda\frac{f'(\psi)}{c(\|\psi'\|^2)}v+\frac{2\lambda^2 J}{c(\|\psi'\|^2)^3}f(\psi)\int_{0}^{\pi} f(\psi(s))v(s)ds
$$
has at least one positive eigenvalue. This comes from Lemma 6.2 in \cite{DavidsonDodds} and the fact that the function $(\bar{d}, d^*)\ni d\mapsto \|(\phi_d)_x\|^2$ is continuous and $f$ is $C^2$, which allows us to choose a $J$ without dependence on $d$, for $(\bar{d}, d^*)$.

Finally, just choose $\delta \in (0,\delta_0)$ small enough such that $c'(u)=-J$, for $u \in (d^*+2\delta, d^*+3\delta)$.
\end{remark}

\section{Final remarks}
In this article, we have proved that, although problem \eqref{eq:nonlocal} is not locally close to a linear problem, not only the equilibria have a ``saddle-point property'' as we already knew, but also the inner structure of the global attractor is very well-understood. We are able to describe all the connections between equilibria.

Just to complement, we have shown that the assumption that $a$ is non-decreasing plays a crucial role in the construction. In fact, in \cite{CCM-RV21C-Inf}, the authors have already shown that the exclusion of such hypothesis may cause the appearance of a lot of equilibria (two or more) in each bifurcation. Now, we have proved, in addition, that some of the positive equilibria can be unstable.

There are other steps we could make. There are variations of \eqref{eq:nonlocal} for which we do not have uniqueness. In that case, the ideas applied here to assure the structure inside the attractor cannot be applied.

\section*{Acknowledgments}

We would like to thank Phillipo Lappicy and professor Alexandre Nolasco de Carvalho for the stimulating discussions and valuable suggestions during the elaboration of the present article.

The first author was partially supported by São Paulo Research Foundation (FAPESP), grants \#2018/00065-9  and \#2020/00104-4, and by the Coordenação de Aperfeiçoamento de Pessoal de Nível Superior – Brasil (CAPES) – PROEX 7547361/D.

The second author has been partially supported by the Spanish Ministry of Science, Innovation and Universities, project PGC2018-096540-B-I00, by the Spanish Ministry of Science and Innovation, project PID2019-108654GB-I00, and by the Junta de Andaluc\'{\i}a and FEDER, project P18-FR-4509.

\bibliography{biblio}
\bibliographystyle{plain}

\end{document}